\documentclass[12pt]{amsart}
\oddsidemargin=8pt
\evensidemargin=8pt
\textheight=582pt
\textwidth=450pt
  
\usepackage{fancyhdr}
\usepackage{stmaryrd}
\usepackage{calrsfs}
\DeclareMathAlphabet{\pazocal}{OMS}{zplm}{m}{n}

\pagestyle{fancy}
\fancyhf{}

\fancyhead[CE]{\fontsize{9}{11}\selectfont H. FOX, A. GOEL, AND S. LIAO}
\fancyhead[CO]{\fontsize{9}{11}\selectfont ARITHMETIC OF SEMISUBTRACTIVE SEMIDOMAINS}
\fancyhead[LE,RO]{\thepage}


\usepackage{amsmath}
\usepackage[nospace,noadjust]{cite}
\usepackage{amsfonts}
\usepackage{amssymb,enumerate}
\usepackage{amsthm}
\usepackage{cite}
\usepackage{comment}
\usepackage{color}
\usepackage[all]{xy}
\usepackage{hyperref}
\usepackage{lineno}
\usepackage{stmaryrd}
\usepackage{varwidth}

\usepackage{mathtools}
\usepackage{bm}
\usepackage{graphicx}
\usepackage{psfrag}
\usepackage{url}

\usepackage{fancyhdr}
\usepackage{tikz-cd}

\usepackage{dirtytalk}
\usepackage{float}
\usepackage{mathrsfs}


\newtheorem*{maintheorem*}{Main Theorem}
\newtheorem{theorem}{Theorem}[section]
\newtheorem{prop}[theorem]{Proposition}

\newtheorem{lemma}[theorem]{Lemma}

\theoremstyle{definition}
\newtheorem{definition}[theorem]{Definition}

\newtheorem{example}[theorem]{Example}
\numberwithin{equation}{section}


\newcommand{\cc}{\mathbb{C}}

\newcommand{\nn}{\mathbb{N}}
\newcommand{\qq}{\mathbb{Q}}
\newcommand{\rr}{\mathbb{R}}
\newcommand{\zz}{\mathbb{Z}}

\newcommand{\uu}{\mathcal{U}}

\newcommand{\red}{\text{red}}

\hypersetup{
	pdftitle={FD},
	colorlinks=true,
	linkcolor=blue,
	citecolor=cyan,
	urlcolor=wine
}

\keywords{semidomain, semisubtractive semidomain, Furstenberg, atomicity, finite factorization, bounded factorization, ACCP, half-factoriality, factoriality}

\subjclass[2010]{Primary: 16Y60, 11C08; Secondary: 20M13, 13F05}

\begin{document}
	
	\mbox{}
	\title{Arithmetic of semisubtractive semidomains}

	\author{Hannah Fox}
	\address{}

	\author{Agastya Goel}
	
	\author{Sophia Liao}
	
	\date{\today}

	\begin{abstract}
		 A subset $S$ of an integral domain is called a semidomain if the pairs $(S,+)$ and $(S\setminus\{0\}, \cdot)$ are commutative and cancellative semigroups with identities. The multiplication of $S$ extends to the group of differences $\mathcal{G}(S)$, turning $\mathcal{G}(S)$ into an integral domain. In this paper, we study the arithmetic of semisubtractive semidomains (i.e., semidomains $S$ for which either $s \in S$ or $-s \in S$ for every $s \in \mathcal{G}(S)$). Specifically, we provide necessary and sufficient conditions for a semisubtractive semidomain to be atomic, to satisfy the ascending chain condition on principals ideals, to be a bounded factorization semidomain, and to be a finite factorization semidomain, which are subsequent relaxations of the property of having unique factorizations. In addition, we present a characterization of factorial and half-factorial semisubtractive semidomains. Throughout the article, we present examples to provide insight into the arithmetic aspects of semisubtractive semidomains.    
	\end{abstract}
	\medskip

	\maketitle


\bigskip
\section{Introduction}
\label{sec:intro}

Factorization theory studies the decomposition of elements into irreducible factors within various algebraic structures, including monoids~\cite{geroldingerreinhardt,gryinkiewicz}, semirings~\cite{BCG21, gottipolo}, integral domains~\cite{AAZ90,AAZ92}, and modules~\cite{baethwiegand, facchini}. A primary objective in this field is to gauge the extent to which an object deviates from having unique factorizations.

Anderson, Anderson, and Zafrullah~\cite{AAZ90} conducted the first systematic study of factorizations in the context of integral domains. In their paper, they introduced several relaxations of the unique factorization property and studied their preservation under algebraic constructions such as localization, polynomial extension, and the $D + M$ construction. Since then, several researchers have delved into the factorization properties of integral domains, as seen in~\cite{andersonmullins,coykendallzafrullah,halterkoch}. These subsequent studies reflect the sustained interest in understanding the factorization properties of integral domains.

A positive semiring is a subset of $\rr_{\geq 0}$ that contains the identities $0$ and $1$ and is closed under the standard operations of addition and multiplication. Positive semirings have generated much interest lately: Correa-Morris and Gotti~\cite{CGG20} investigated factorizations of positive algebraic valuations of $\nn_0[x]$, and Albizu-Campos et al.~\cite{ABP21} explored the factorization properties of positive semirings generated by some of the powers of a rational number. Moreover, Baeth et al.~\cite{BCG21} classified the additive and multiplicative factorizations of positive semirings. Finally, Baeth and Gotti~\cite{BG20} studied positive semirings in connection with factorizations of matrices.

A subset $S$ of an integral domain is called a semidomain if the pairs $(S,+)$ and $(S\setminus\{0\}, \cdot)$ are commutative and cancellative semigroups with identities. The multiplication of $S$ extends to the group of differences $\mathcal{G}(S)$, turning $\mathcal{G}(S)$ into an integral domain (\cite[Lemma~2.2]{gottipolo}). From now on, we refer to $\mathcal{G}(S)$ as the domain of differences of $S$. Observe that both integral domains and positive semirings belong to the class of semidomains, thus establishing a common ground to understand the similarities among these mathematical objects. In recent years, significant attention~\cite{CF19,CCMS09,chapmanpolo,gottipolo} has been devoted to investigating the arithmetic of semidomains with polynomial-like structures. 

In this paper, we study the factorization properties of semisubtractive semidomains (i.e., semidomains $S$ for which either $s \in S$ or $-s \in S$ for every $s \in \mathcal{G}(S)$). Previous literature has explored the algebraic and factorization properties of semisubtractive semidomains. In~\cite{dulin}, Dulin and Mosher showed that semisubtractive semidomains $S$ share many algebraic properties with their domains of differences $\mathcal{G}(S)$, while in~\cite{doverstone}, Dover and Stone introduced a generalization of the notion of semisubtractivity and, for semirings satisfying this condition, provided analogues of Artin-Wedderburn and Goldie structure theorems. Moreover, Stone~\cite{stone} and Alarcón and Anderson~\cite{alarconanderson} studied the ideals of certain classes of semisubtractive semirings.

Following \cite{pC2017}, a semidomain is called Furstenberg if every nonzero nonunit element is divisible by an irreducible. Furstenberg semidomains have generated some interest lately~\cite{lebowitz, gottipolo2, gottizafrullah}. In Section~\ref{sec: Furstenberg}, we examine semisubtractive semidomains that satisfy the Furstenberg property. Specifically, we show that a semisubtractive semidomain $S$ is Furstenberg if and only if $\mathcal{G}(S)$ is Furstenberg.

In Section~\ref{sec: atomicity}, we explore the conditions under which a semisubtractive semidomain is a finite factorization semidomain (resp., a bounded factorization semidomain). Let $S$ be a semidomain. If every nonzero element of $S$ has finitely many divisors up to associates, then we call $S$ a finite factorization semidomain, while $S$ is a bounded factorization semidomain provided that there exists a function $\ell\colon S\setminus\{0\} \to \nn_0$ satisfying that $\ell(s) = 0$ if and only if $s$ is a unit of $S$ and $\ell(ss') \geq \ell(s)+\ell(s')$ for elements $s,s' \in S$. A finite factorization semidomain is a bounded factorization semidomain.

 The notion of half-factoriality was introduced in~\cite{carlitz}, where it was proved that the principal order $\pazocal{O}_F$ of an algebraic number field $F$ is half-factorial (i.e., every nonzero element of $\pazocal{O}_F$ has factorizations of equal length) if and only if $F$ has class number at most two. Half-factorial domains have been extensively investigated~\cite{coykendall, aZ76,aZ80}. We conclude in Section~\ref{sec: factoriality}, where we completely describe when a semisubtractive semidomain is unique factorization. Additionally, we present a characterization of half-factorial semisubtractive semidomains.

\bigskip
\section{Preliminaries}
\label{sec:background}

This section introduces the notation and terminology that will be used later. Readers who seek a comprehensive background on factorization theory and semiring theory can refer to \cite{GH06} and \cite{JG1999}, respectively. 

\subsection{General Notation}

The standard notations used are $\zz$, $\qq$, $\rr$, and $\mathbb{P}$, which represent the sets of integers, rational numbers, real numbers, and prime numbers, respectively. Additionally, we use $\nn$ to denote the set of positive integers and $\nn_0$ to denote the set of non-negative integers. Given $r \in \rr$ and a subset $S \subseteq \rr$, we use $S_{>r}$ to denote the set of elements in $S$ that are greater than $r$. Similarly, $S_{\geq r}$ is defined as the set of elements greater than or equal to $r$. For $m, n \in \nn_0$, we use the notation $\llbracket m,n \rrbracket$ to represent the discrete interval from $m$ to $n$. That is, $\llbracket m,n \rrbracket$ is defined as $\{k \in \zz \mid m \leq k \leq n\}$. 


\subsection{Monoids and Factorizations}

In this paper, we define a \emph{monoid}\footnote{Note that the standard definition of a monoid does not assume the cancellative and commutative conditions.} to be a cancellative and commutative semigroup with identity. For convenience, we use multiplicative notation for monoids, unless otherwise specified. Let $M$ be a monoid with identity $1$. A subsemigroup of $M$ containing $1$ is called a \emph{submonoid}. We write $M^{\times}$ for the group of units of $M$. We also use $M_{\red}$ to denote the quotient $M/M^{\times}$, which is also a monoid. We say that $M$ is \emph{reduced} if $M^{\times}$ is the trivial group, in which case we identify $M_{\red}$ with $M$. The \emph{Grothendieck group} of~$M$ is an abelian group $\mathcal{G}(M)$ such that there exists a monoid homomorphism $\iota \colon M \to \mathcal{G}(M)$ satisfying the following universal property: for any monoid homomorphism $f \colon M \to G$, where $G$ is an abelian group, there exists a unique group homomorphism $g \colon \mathcal{G}(M) \to G$ such that $f = g \circ \iota$. The Grothendieck group of a monoid is unique up to isomorphism. For a subset $S$ of $M$, we let $\langle S \rangle$ denote the smallest submonoid of $M$ containing~$S$, and $S$ is a \emph{generating set} of~$M$ if $M = \langle S \rangle$. We say that $M$ is \emph{torsion-free} if $a^n=b^n$ implies $a=b$ for all $a,b\in M$ and $n\in\nn$.

For elements $b,c \in M$, it is said that $b$ \emph{divides} $c$ \emph{in} $M$, denoted by $b \mid_M c$, if there exists $c' \in M$ such that $c = c'b$. Two elements $b,c \in M$ are \emph{associates} in $M$, which we denote by $b \simeq_M c$, provided that $b \mid_M c$ and $c \mid_M b$. The notation established in this paragraph omits the subscript whenever the monoid can be inferred from the context. 

We define an \emph{atom} in a monoid $M$ as an element $a\in M\setminus M^{\times}$ such that for any $b,c\in M$, the equality $a=bc$ implies that either $b\in M^{\times}$ or $c\in M^{\times}$. Let $\mathcal{A}(M)$ be the set of all atoms of $M$. Following~\cite{pC2017}, we say that the monoid $M$ is \emph{Furstenberg} if every nonunit element is divisible by an atom, while $M$ is said to be \emph{atomic} if every element of $M\setminus M^{\times}$ can be expressed as a (finite) product of atoms. Clearly, an atomic monoid is Furstenberg. Moreover, one can easily verify that $M$ is atomic if and only if $M_{\red}$ is atomic.

An \emph{ideal} of a monoid $M$ is a subset $I\subseteq M$ satisfying $IM\subseteq I$ or, equivalently, $IM=I$. If $I$ is an ideal of $M$ such that $I=bM$ for some $b\in M$, then $I$ is called a \emph{principal ideal}. The monoid $M$ satisfies the \emph{ascending chain condition on principal ideals} (\emph{ACCP}) if every increasing sequence of principal ideals of $M$ (under inclusion) becomes constant from one point on. The ACCP is equivalent to the following condition:

\smallskip
If $(b_i)_{i \in \nn}$ is a sequence of elements of $M$ such that $b_{i + 1} \mid b_i$ for every $i \in \nn$, then there exists some $k \in \nn$ such that $b_n \simeq b_k$ for all $n \geq k$.
\smallskip

\noindent It is well known that monoids satisfying the ACCP are atomic.

Assuming that $M$ is an atomic monoid, let $\mathsf{Z}(M)$ denote the free commutative monoid generated by the set of atoms $\mathcal{A}(M_{\red})$. The elements of $\mathsf{Z}(M)$ are called factorizations, and the length $\ell$ of a factorization $z=a_1\cdots a_{\ell}$, where $a_1,\dots,a_{\ell}\in\mathcal{A}(M_{\red})$, is denoted by $|z|$. There is a unique monoid homomorphism $\pi:\mathsf{Z}(M)\rightarrow M_{\red}$ that maps every atom to itself. For each $b\in M$, we define two sets that are essential for the study of factorization theory:
\[
	\mathsf{Z}_M(b) = \pi^{-1}(bM^{\times})\subseteq\mathsf{Z}(M) \hspace{.3 cm}\text{ and } \hspace{.3 cm} \mathsf{L}_M(b) = \{|z|\mid z\in\mathsf{Z}_M(b)\}\subseteq\nn_0.
\]
As usual, we drop the subscript $M$ if there is no risk of ambiguity. We use the following terminology: $M$ is a \emph{finite factorization monoid} (\emph{FFM}) if $\mathsf{Z}(b)$ is finite for all $b\in M$; $M$ is a \emph{bounded factorization monoid} (\emph{BFM}) if $\mathsf{L}(b)$ is finite for all $b\in M$; $M$ is a \emph{half-factorial monoid} (\emph{HFM}) if $|\mathsf{L}(b)|=1$ for all $b\in M$; and $M$ is a \emph{unique factorization monoid} (\emph{UFM}) if $|\mathsf{Z}(b)|=1$ for all $b\in M$. A unique factorization monoid is also called \emph{factorial}. It follows from the definitions that every FFM is a BFM, every BFM satisfies the ACCP, every UFM is an HFM, and every HFM is a BFM.

\subsection{Semirings and Semidomains}

A \emph{commutative semiring} $S$ is defined as a non-empty set with two binary operations: addition denoted by ``+" and multiplication denoted by ``$\cdot$". The properties of a commutative semiring are:

\begin{enumerate}
	\item $(S,+)$ forms a monoid with an identity element 0;
	\item $(S,\cdot)$ forms a commutative semigroup with an identity element 1, where 1 $\neq$ 0;
	\item $s_1 \cdot (s_2+s_3)= s_1 \cdot s_2 + s_1 \cdot s_3$ for all $s_1, s_2, s_3 \in S$.
\end{enumerate}

If $S$ is a commutative semiring, then the distributive law and the cancellative property of addition in $S$ imply that $0 \cdot s = 0$ for all $s \in S$. In this paper, $ss'$ is used interchangeably with $s\cdot s'$ unless it causes confusion. In this context, it is worth noting that the conventional definition of a semiring does not necessarily require the semigroup $(S,+)$ to be cancellative. However, in our particular case, the semirings of interest do possess cancellative additive structures. Similarly, the general definition of a semiring $S$ does not impose the condition of commutativity on the semigroup $(S,\cdot)$. Nonetheless, for the scope of this paper, we are primarily concerned with commutative semirings. Hence, we will employ the term \emph{semiring} to specifically refer to a commutative semiring, assuming commutativity for both operations. 

A subset $S'$ of $S$ is called a \emph{subsemiring} if $(S',+)$ is a submonoid of $(S,+)$ that is closed under multiplication and contains 1. Every subsemiring of $S$ is a semiring. A semiring $S$ is called a \emph{semidomain} if it is a subsemiring of an integral domain. If $S$ is a semidomain, then $(S \setminus \{0\},\cdot)$ forms a monoid, denoted by $S^*$ and referred to as the \emph{multiplicative monoid of $S$}. We use the term \emph{invertible elements} of $S$ to refer to the units of the monoid $(S,+)$ and simply \emph{units} of $S$ to refer to the units of the multiplicative monoid $S^*$, to avoid confusion. The group of units of $S$ is denoted by $S^\times$, while the additive group of invertible elements of $S$ is denoted by $\uu(S)$. We use $\mathcal{A}(S)$ instead of $\mathcal{A}(S^*)$ to refer to the set of atoms of the multiplicative monoid $S^*$, and we write $s \mid_S s'$ to indicate that $s$ divides $s'$ in $S^*$.

\begin{lemma} \label{lem:characterization of integral semirings} ~\cite[Lemma 2.2]{gottipolo}
	For a semiring $S$, the following conditions are equivalent.
	\begin{enumerate}
		\item[(a)] $S$ is a semidomain.
		
		\item[(b)] The multiplication of $S$ extends to the Grothendieck group $\mathcal{G}(S)$ of $(S,+)$ turning $\mathcal{G}(S)$ into an integral domain.
	\end{enumerate}
\end{lemma}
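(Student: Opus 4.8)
The plan is to prove the two implications separately, with essentially all of the work concentrated in $(a)\Rightarrow(b)$. Throughout I would use that, since $(S,+)$ is a cancellative commutative monoid, the canonical map $\iota\colon S\to\mathcal{G}(S)$ into its Grothendieck group is injective and every element of $\mathcal{G}(S)$ has the form $\iota(a)-\iota(b)$ for some $a,b\in S$. I will freely identify $S$ with its image $\iota(S)$.

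For the direction $(b)\Rightarrow(a)$, suppose the multiplication of $S$ extends to $\mathcal{G}(S)$ so that $\mathcal{G}(S)$ becomes an integral domain. Then $\iota$ is an injective semiring homomorphism: it respects addition by construction, and it respects multiplication because, by hypothesis, the extended multiplication restricts to that of $S$. Hence $\iota(S)$ is a subsemiring of the integral domain $\mathcal{G}(S)$ that is isomorphic to $S$, so $S$ is (isomorphic to) a subsemiring of an integral domain and is therefore a semidomain. This settles the implication at once.

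The substantive direction is $(a)\Rightarrow(b)$. Suppose $S$ is a subsemiring of an integral domain $R$, so that the inclusion $S\hookrightarrow R$ is in particular a monoid homomorphism $(S,+)\to(R,+)$ into an abelian group. The universal property of the Grothendieck group then produces a unique group homomorphism $\varphi\colon\mathcal{G}(S)\to(R,+)$ with $\varphi(\iota(a))=a$ for every $a\in S$, so that $\varphi(\iota(a)-\iota(b))=a-b$. I would first verify that $\varphi$ is injective: if $a-b=0$ in $R$, then $a=b$ in $R$, hence $a=b$ in $S$ since $S\subseteq R$, giving $\iota(a)=\iota(b)$. Thus $\varphi$ identifies $\mathcal{G}(S)$ with the additive subgroup $D=\{a-b : a,b\in S\}$ of $R$.

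The crux of the argument, and the step I expect to demand the most care, is transferring the multiplicative structure. I would show that $D$ is closed under the multiplication of $R$ and is therefore a subring. This rests on the identity $(a-b)(c-d)=(ac+bd)-(ad+bc)$, whose right-hand side lies in $D$ precisely because $S$ is closed under both addition and multiplication. Since $1=\iota(1)\in D$, the subgroup $D$ is a subring of $R$, and as a subring of an integral domain it has no zero divisors and is itself an integral domain. Transporting this multiplication back along the isomorphism $\varphi$ equips $\mathcal{G}(S)$ with a multiplication making it an integral domain; and because $\varphi\circ\iota$ is the inclusion $S\hookrightarrow R$, this multiplication restricts to the original multiplication of $S$. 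The one subtlety worth checking explicitly is exactly this compatibility, namely that the multiplication so obtained genuinely \emph{extends} that of $S$ rather than merely resembling it, which the identification through $\varphi$ guarantees.
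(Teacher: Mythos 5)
Your proof is correct: the reduction of $(a)\Rightarrow(b)$ to showing that $D=\{a-b : a,b\in S\}$ is a subring of the ambient integral domain $R$, via the identity $(a-b)(c-d)=(ac+bd)-(ad+bc)$ and the injectivity of the induced map $\mathcal{G}(S)\to R$, together with the immediate converse, is exactly the standard argument. The paper itself gives no proof of this lemma (it is quoted from \cite[Lemma~2.2]{gottipolo}), and your argument matches what that reference does.
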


A semidomain $S$ is called Furstenberg provided that $S^*$ is a Furstenberg monoid. If the multiplicative monoid $S^*$ of $S$ satisfies atomicity or the ACCP, we call $S$ atomic or ACCP, respectively. We refer to $S$ as a BFS, an FFS, an HFS, or a UFS if $S^*$ is a BFM, an FFM, an HFM, or a UFM, respectively. It is worth noting that if $S$ is an integral domain, then we recover the established notions of UFD, BFD, FFD, and HFD. However, even though a semidomain $S$ can be embedded in an integral domain $R$, $S$ may not inherit any atomic property from $R$ since the integral domain $\qq[x]$ is a UFD, but its subring $\zz + x\qq[x]$ is not even atomic.

\begin{definition}
	Let $S$ be a semidomain. We say that $S$ is \emph{semisubtractive} if, for every $s \in \mathcal{G}(S)$, we have that either $s \in S$ or $-s \in S$.
\end{definition}

\subsection{Localization of Semidomains}

Let $S$ be a semidomain, and let $D$ be a \emph{multiplicative subset} of $S$ (i.e., a submonoid of $S^*$). Since $D$ is also a multiplicative subset of $\mathcal{G}(S)$, we can consider the localization of $\mathcal{G}(S)$ at $D$, which we denote by $D^{-1}\mathcal{G}(S)$. Set $R = (S \times D)/\sim$, where $\sim$ is an equivalence relation on $S \times D$ defined by $(s,d) \sim (s',d')$ if and only if $sd' = ds'$. We let $\frac{s}{d}$ denote the equivalence class of $(s,d)$. Define the following operations in $R$:
\[
\frac{s}{d} \cdot \frac{s'}{d'} = \frac{ss'}{dd'} \hspace{.3 cm} \text{ and } \hspace{.3 cm} \frac{s}{d} + \frac{s'}{d'} = \frac{sd' + ds'}{dd'}.
\]
It is routine to verify that these operations are well defined and that $(R,+,\cdot)$ is a semiring\footnote{The localization of semirings is presented in greater generality in \cite[Chapter~11]{JG1999}.}. The semiring $R$ is called \emph{the localization of $S$ at $D$} and is denoted by $D^{-1}S$. Let $\varphi\colon R \to D^{-1}\mathcal{G}(S)$ be a function given by $\varphi(\frac sd) = \overline{\frac sd}$, where $\overline{\frac sd}$ represents the equivalence class of $(s,d)$ as an element of $D^{-1}\mathcal{G}(S)$. Note that $\varphi$ is a well-defined semiring homomorphism. Since $\varphi$ is injective, $R$ is a semidomain. 

\begin{lemma}\label{lemma: localization of semisubtractive semidomains}
	The localization of a semisubtractive semidomain is semisubtractive.
\end{lemma}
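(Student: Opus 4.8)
The plan is to reduce the semisubtractivity of the localization $R = D^{-1}S$ to that of $S$ itself, by first identifying the domain of differences $\mathcal{G}(R)$ with $D^{-1}\mathcal{G}(S)$ and then transporting the dichotomy ``$g \in S$ or $-g \in S$'' through the localization map.

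First I would pin down $\mathcal{G}(R)$. Recall from the construction preceding the statement that $\varphi \colon R \to D^{-1}\mathcal{G}(S)$, $\varphi(\frac{s}{d}) = \overline{\frac{s}{d}}$, is an injective semiring homomorphism, so I may regard $R$ as a subsemiring of the integral domain $D^{-1}\mathcal{G}(S)$. I claim that $D^{-1}\mathcal{G}(S)$ is precisely the domain of differences of $R$. Indeed, every element of $D^{-1}\mathcal{G}(S)$ has the form $\frac{g}{d}$ with $g \in \mathcal{G}(S)$ and $d \in D$; writing $g = s_1 - s_2$ with $s_1, s_2 \in S$ gives $\frac{g}{d} = \frac{s_1}{d} - \frac{s_2}{d}$, a difference of two elements of $R$. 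Hence $D^{-1}\mathcal{G}(S)$ is generated as an abelian group by $R$. The universal property of the Grothendieck group yields a group homomorphism $\mathcal{G}(R) \to D^{-1}\mathcal{G}(S)$ extending $\varphi$; it is surjective by the displayed representation, and injective because $\varphi$ is injective and $R$ is additively cancellative. Thus $\mathcal{G}(R) \cong D^{-1}\mathcal{G}(S)$ compatibly with the inclusion $R \hookrightarrow D^{-1}\mathcal{G}(S)$, and under this identification checking semisubtractivity of $R$ amounts to showing that for each $x \in D^{-1}\mathcal{G}(S)$ either $x \in R$ or $-x \in R$.

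Next I would verify this dichotomy directly. Fix $x \in D^{-1}\mathcal{G}(S)$ and write $x = \frac{g}{d}$ with $g \in \mathcal{G}(S)$ and $d \in D$. Since $S$ is semisubtractive, either $g \in S$ or $-g \in S$. In the first case $x = \frac{g}{d} \in D^{-1}S = R$; in the second case $-x = \frac{-g}{d} \in D^{-1}S = R$. Either way the defining condition holds, so $R$ is semisubtractive.

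The only step requiring genuine care is the identification $\mathcal{G}(R) \cong D^{-1}\mathcal{G}(S)$: I expect the main point worth spelling out to be that the embedding $\varphi$ realizes $D^{-1}\mathcal{G}(S)$ as the group of differences of $R$, rather than merely as some overgroup containing it. Once the representation of a general element of $D^{-1}\mathcal{G}(S)$ in the form $\frac{g}{d}$ is available, the semisubtractivity of $S$ transfers immediately and no further obstacle arises.
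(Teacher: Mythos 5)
Your proof is correct and follows essentially the same route as the paper: write an arbitrary element of $D^{-1}\mathcal{G}(S)$ as $\frac{g}{d}$, apply semisubtractivity of $S$ to $g$, and conclude that $\frac{g}{d}$ or $-\frac{g}{d}$ lies in $D^{-1}S$. The only difference is that you spell out the identification $\mathcal{G}(D^{-1}S) \cong D^{-1}\mathcal{G}(S)$, which the paper leaves implicit; this is a reasonable piece of added care but not a different argument.
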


\begin{proof}
	Suppose that $S$ is a semisubtractive semidomain, and let $D$ be a multiplicative subset of $S$. Let $\frac{s}{d} \in D^{-1}\mathcal{G}(S)$ such that $\frac{s}{d} \notin D^{-1}S$ (which implies that $s \notin S$). Since $S$ is semisubtractive, we have that $-s \in S$. This implies that $\frac{-s}{d} = -\frac{s}{d} \in D^{-1}S$, which concludes our argument.
\end{proof}

\section{Furstenbergness} \label{sec: Furstenberg}

In this section, we study semisubtractive semidomains that satisfy the Furstenberg property. Specifically, we show that a semisubtractive semidomain is Furstenberg if and only if $\mathcal{G}(S)$ is Furstenberg. Before discussing the main result of this section, we establish some lemmas concerning the units and irreducibles of a semisubtractive semidomain.

\begin{lemma} \label{lemma: units}
	Let $S$ be a semisubtractive semidomain. Then $S^{\times} = S \cap \mathcal{G}(S)^{\times}$\!.
\end{lemma}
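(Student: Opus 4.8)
The plan is to establish the two inclusions separately, observing that only the reverse inclusion requires semisubtractivity. For $S^{\times} \subseteq S \cap \mathcal{G}(S)^{\times}$, I would take a unit $u$ of the multiplicative monoid $S^*$, so that $u \neq 0$ and there is $v \in S^*$ with $uv = 1$. Since $S \subseteq \mathcal{G}(S)$ and $\mathcal{G}(S)$ is an integral domain by Lemma~\ref{lem:characterization of integral semirings}, the relation $uv = 1$ already witnesses that $u \in \mathcal{G}(S)^{\times}$; combined with the trivial fact $u \in S$, this gives $u \in S \cap \mathcal{G}(S)^{\times}$. This direction uses nothing beyond the embedding $S \hookrightarrow \mathcal{G}(S)$ and closure of $S$ under multiplication.

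The substance lies in the reverse inclusion $S \cap \mathcal{G}(S)^{\times} \subseteq S^{\times}$. Given $u \in S \cap \mathcal{G}(S)^{\times}$, note that $u \neq 0$ (it is a unit of the domain $\mathcal{G}(S)$), so $u \in S^*$, and its inverse $w := u^{-1}$ lives in $\mathcal{G}(S)$. The goal is to promote $w$ to an element of $S$, since then $uw = 1$ would certify that $u \in S^{\times}$. Here I would invoke semisubtractivity applied to $w$: either $w \in S$, in which case we are immediately done, or else $-w \in S$.

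The crux is the case $-w \in S$. The key observation is that, because $S$ is closed under multiplication and both $u$ and $-w$ lie in $S$, the product $u(-w) = -(uw) = -1$ lies in $S$. Consequently $-1$ is a unit of $S$ (as $(-1)^2 = 1$ with $-1, 1 \in S$), and therefore $w = (-1)(-w)$, being a product of two elements of $S$, also lies in $S$. This recovers $uw = 1$ with $w \in S$, so $u \in S^{\times}$, completing the argument. The main (and essentially only) obstacle is recognizing that in this case semisubtractivity forces $-1 \in S$; once that is in hand, everything else is immediate.
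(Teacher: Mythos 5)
Your proof is correct and follows essentially the same route as the paper: both handle the easy inclusion via the embedding $S \hookrightarrow \mathcal{G}(S)$, and for the reverse inclusion both apply semisubtractivity to the inverse $w$ of $u$ and observe that in the nontrivial case $u(-w) = -1 \in S$. The only (cosmetic) difference is the finish: the paper concludes from $-1 \in S$ that $S = \mathcal{G}(S)$, whereas you more locally note that $-1 \in S^{\times}$ and hence $w = (-1)(-w) \in S$; either way the argument is complete.
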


\begin{proof}
	The inclusion $S^{\times} \subseteq S \cap \mathcal{G}(S)^{\times}$ clearly holds. Now let $u \in S \cap \mathcal{G}(S)^{\times}$\!. There exists $s \in \mathcal{G}(S)$ such that $us = 1$. Without loss of generality, we may assume that $s \not\in S$, which implies that $-s \in S$ as $S$ is semisubtractive. Since $u(-s) = -1$ is an element of $S$, we have $S = \mathcal{G}(S)$, which concludes our argument.
\end{proof}

Given a semisubtractive semidomain $S$, an atom of $S$ is not necessarily an atom of $\mathcal{G}(S)$ as the following example illustrates.

\begin{example} \label{ex: atom in S not in gp(S)}
	Consider the semidomain 
	\[
	S = \left\{c_nx^n + \cdots + c_1x + c_0 \in \zz[x] \mid \text{ either } c_0 > 0 \text{ or } c_1 \geq c_0 = 0\right\}\!.
	\]
	It is worth noting that $S$ is a semisubtractive semidomain such that $\mathcal{G}(S) \cong \zz[x]$. Observe that $-x^2 \in \mathcal{A}(S) \setminus \mathcal{A}(\zz[x])$. 
\end{example}

Example~\ref{ex: atom in S not in gp(S)} shows that, in contrast to units, atoms present a greater challenge when it comes to their description.

\begin{lemma} \label{lemma: atoms}
	Let $S$ be a semisubtractive semidomain. The following statements hold.
	\begin{enumerate}
		\item $S \cap \mathcal{A}(\mathcal{G}(S)) \subseteq \mathcal{A}(S)$.
		\item If $a \in \mathcal{A}(S) \setminus \mathcal{A}(\mathcal{G}(S))$, then $-a \in S^* \setminus (S^{\times}\cup \mathcal{A}(S))$.
	\end{enumerate}
\end{lemma}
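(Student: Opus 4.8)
The plan is to lean on Lemma~\ref{lemma: units} throughout, since that result is exactly what transfers the unit and nonunit status of an element of $S$ between $S$ and $\mathcal{G}(S)$. For part~(1), I would take $a \in S \cap \mathcal{A}(\mathcal{G}(S))$. As an atom of $\mathcal{G}(S)$, the element $a$ is a nonzero nonunit of $\mathcal{G}(S)$, so $a \notin \mathcal{G}(S)^{\times}$ and hence $a \notin S \cap \mathcal{G}(S)^{\times} = S^{\times}$ by Lemma~\ref{lemma: units}; thus $a$ is a nonunit of $S$. Given any factorization $a = bc$ with $b, c \in S^*$, this is simultaneously a factorization in $\mathcal{G}(S)$, so atomicity of $a$ there forces one factor, say $b$, to lie in $\mathcal{G}(S)^{\times}$. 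Since $b \in S$, Lemma~\ref{lemma: units} gives $b \in S^{\times}$, whence $a \in \mathcal{A}(S)$.

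For part~(2), I would fix $a \in \mathcal{A}(S) \setminus \mathcal{A}(\mathcal{G}(S))$. As an atom of $S$, the element $a$ is a nonzero nonunit of $S$, so by Lemma~\ref{lemma: units} it is a nonzero nonunit of $\mathcal{G}(S)$; multiplying by the unit $-1$ shows that $-a$ is likewise a nonzero nonunit of $\mathcal{G}(S)$. Because $a$ is a nonzero nonunit of $\mathcal{G}(S)$ that fails to be an atom there, I can write $a = bc$ with $b$ and $c$ both nonzero nonunits of $\mathcal{G}(S)$. The heart of the argument is then a sign analysis driven by semisubtractivity: each of $b$ and $c$ has one of its two signs in $S$.

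The key observation is that the two sign patterns producing a factorization of $a$ itself inside $S$ --- namely $b, c \in S$ (giving $a = bc$) and $-b, -c \in S$ (giving $a = (-b)(-c)$) --- are both impossible, since atomicity of $a$ in $S$ would force one factor to be a unit of $S$, hence a unit of $\mathcal{G}(S)$ by Lemma~\ref{lemma: units}, contradicting that $b$ and $c$ are nonunits of $\mathcal{G}(S)$. This leaves the mixed patterns $b \in S,\ -c \in S$ and $-b \in S,\ c \in S$, in each of which $-a = b(-c)$ or $-a = (-b)c$ exhibits $-a$ as a product of two elements of $S$. These factors are negatives of nonunits of $\mathcal{G}(S)$, hence nonunits of $\mathcal{G}(S)$, and therefore nonunits of $S$ by Lemma~\ref{lemma: units}. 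Consequently $-a$ is a nonzero element of $S$, so $-a \in S^*$; it is a product of two nonunits of $S$, so $-a \notin \mathcal{A}(S)$; and since $-a$ is a nonunit of $\mathcal{G}(S)$, Lemma~\ref{lemma: units} gives $-a \notin S^{\times}$. Hence $-a \in S^* \setminus (S^{\times} \cup \mathcal{A}(S))$, as desired. I expect the only real obstacle to be the bookkeeping in this sign analysis --- in particular, correctly isolating which patterns yield a factorization of $a$ (and are thus excluded) versus a factorization of $-a$; once that is pinned down, every remaining step is a routine application of Lemma~\ref{lemma: units}.
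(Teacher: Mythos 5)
Your proof is correct and follows essentially the same route as the paper's: part (1) is the routine transfer of unit status via Lemma~\ref{lemma: units}, and part (2) rests on the same sign analysis of a factorization $a = bc$ into nonunits of $\mathcal{G}(S)$, ruling out the two ``unmixed'' sign patterns by atomicity of $a$ in $S$ and concluding that a mixed pattern exhibits $-a$ as a product of two nonunits of $S$. No changes needed.
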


\begin{proof}
	The first part follows readily from Lemma~\ref{lemma: units}. As for $(2)$, let $a \in \mathcal{A}(S) \setminus \mathcal{A}(\mathcal{G}(S))$, and write $a = s_1 s_2$ for some nonunits $s_1, s_2 \in \mathcal{G}(S)$. Note that there is no loss in assuming that $s_1 \not\in S$ which, in turn, implies that $-s_1 \in S$. Consequently, we have that $-s_2 \not\in S$; otherwise, we would have $a = (-s_1)(-s_2)$ but neither $-s_1$ nor $-s_2$ is a unit of $S$ by Lemma~\ref{lemma: units}. Since $s_2 \in S$, we obtain that $-a = (-s_1)s_2$ is a nonzero nonunit element of $S$ (that is obviously not irreducible).
\end{proof}

Now we are in a position to prove the main result of this section.

\begin{theorem} \label{theorem: Furstenberg}
	Let $S$ be a semisubtractive semidomain. Then $S$ is Furstenberg if and only if $\mathcal{G}(S)$ is Furstenberg.
\end{theorem}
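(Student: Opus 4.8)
The plan is to prove the two implications separately, in each case reducing to a statement about a single nonzero nonunit and transporting an atom (resp.\ irreducible) between $S$ and $\mathcal{G}(S)$ by adjusting signs. Two reductions are common to both directions. By Lemma~\ref{lemma: units}, an element of $S$ is a nonzero nonunit of $S$ if and only if it is a nonzero nonunit of $\mathcal{G}(S)$; moreover, since $S$ is semisubtractive, every nonzero nonunit of $\mathcal{G}(S)$ equals $\pm s$ for some nonzero nonunit $s\in S$, and divisibility by an irreducible of $\mathcal{G}(S)$ is insensitive to sign. Hence $\mathcal{G}(S)$ is Furstenberg if and only if every nonzero nonunit $s\in S$ is divisible in $\mathcal{G}(S)$ by some $\pi\in\mathcal{A}(\mathcal{G}(S))$, and it suffices to compare this with divisibility of $s$ by an atom of $S$.

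For the implication $\mathcal{G}(S)$ Furstenberg $\Rightarrow S$ Furstenberg, fix a nonzero nonunit $s\in S$ and write $s=\pi q$ with $\pi\in\mathcal{A}(\mathcal{G}(S))$ and $q\in\mathcal{G}(S)$. Replacing $(\pi,q)$ by $(-\pi,-q)$ if necessary, I may assume $\pi\in S$, so $\pi\in\mathcal{A}(S)$ by Lemma~\ref{lemma: atoms}(1). If $q\in S$ then $\pi\mid_S s$ and we are done. The observation that makes the generic case clean is that $q\notin S$ forces $-q\in S$ and hence $-s=\pi(-q)\in S$; consequently, whenever $-s\notin S$ the cofactor automatically lies in $S$ and $s$ is divisible by the atom $\pi$. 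This settles every $s$ with $-s\notin S$.

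The remaining, genuinely delicate, case is $s\in\uu(S)$, that is $s\in S$ with $-s\in S$. Here I would use that $\uu(S)$ is an ideal of $\mathcal{G}(S)$: for $u\in\uu(S)$ and $g\in\mathcal{G}(S)$ semisubtractivity gives $g\in S$ or $-g\in S$, and in either case $ug=(\mp u)(\mp g)\in S$ and $-ug\in S$, so $ug\in\uu(S)$; in particular $s\,\mathcal{G}(S)\subseteq\uu(S)\subseteq S$. The normalized irreducible $\pi$ yields an atom divisor of $s$ unless $\pi\notin\uu(S)$ and $q\notin S$ (if $\pi\in\uu(S)$ then $-\pi\in S\cap\mathcal{A}(\mathcal{G}(S))\subseteq\mathcal{A}(S)$ by Lemma~\ref{lemma: atoms}(1), and $s=(-\pi)(-q)$ exhibits one). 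To dispose of the remaining possibility I would argue by contradiction. Suppose $s\in\uu(S)$ has no atom divisor in $S$. Then every nonunit divisor $d\mid_S s$ must lie in $\uu(S)$, for if $d\notin\uu(S)$ the already-settled regime ($-d\notin S$) produces an atom dividing $d$, hence $s$. Since $s$ is then a nonunit that is not an atom, one factors $s=c_1e_0$ with $c_1,e_0\in\uu(S)$ nonunits and iterates, producing an infinite properly ascending chain $(s)\subsetneq(c_1)\subsetneq\cdots$ in $\mathcal{G}(S)$ all of whose links and cofactors lie in $\uu(S)$, so that $s\in\bigcap_{n\ge1}\uu(S)^n$, where $\uu(S)^n$ denotes the $n$-th power of the ideal $\uu(S)$. \emph{Ruling out a nonzero such $s$ using the Furstenberg hypothesis on $\mathcal{G}(S)$ is the crux, and the step I expect to be the main obstacle}: Furstenbergness alone does not give the ascending chain condition, so the argument must exploit the special ideal structure of $\uu(S)$ together with Lemma~\ref{lemma: atoms}(2) (which, since it shows $-a\in S$ whenever $a\in\mathcal{A}(S)\setminus\mathcal{A}(\mathcal{G}(S))$, locates the exotic atoms of $S$ inside $\uu(S)$).

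For the converse, $S$ Furstenberg $\Rightarrow\mathcal{G}(S)$ Furstenberg, I would take a nonzero nonunit $g\in\mathcal{G}(S)$, assume $g\in S$ after a sign change, and obtain $a\in\mathcal{A}(S)$ with $a\mid_S g$. If $a\in\mathcal{A}(\mathcal{G}(S))$ we are done. Otherwise $a\in\mathcal{A}(S)\setminus\mathcal{A}(\mathcal{G}(S))$, which by Lemma~\ref{lemma: atoms}(2) gives $-a\in S$, so $a\in\uu(S)$ and hence $g\in\uu(S)$. Assuming $g$ has no irreducible divisor in $\mathcal{G}(S)$, any atom of $S$ extracted (via the Furstenberg property of $S$) from a nonunit divisor of $g$ must be exotic, hence lie in $\uu(S)$, forcing every such divisor into $\uu(S)$; Lemma~\ref{lemma: atoms}(2) then lets me factor the exotic atom within $\uu(S)$, and iterating yields $a\in\bigcap_{n\ge1}\uu(S)^n$ exactly as above. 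I therefore expect both implications to hinge on the same lemma — that the powers of the ideal $\uu(S)$ intersect in $0$ under the relevant Furstenberg hypothesis — supplemented by the sign bookkeeping of Lemmas~\ref{lemma: units} and~\ref{lemma: atoms}.
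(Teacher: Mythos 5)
There is a genuine gap, and you have correctly located it yourself: both of your implications reduce to a hard case that your proposed closure (showing $\bigcap_{n\ge 1}\uu(S)^n$ is trivial under a Furstenberg hypothesis) does not resolve, and indeed cannot resolve by chain arguments, since Furstenbergness carries no ascending chain condition. The sign bookkeeping and the reductions via Lemmas~\ref{lemma: units} and~\ref{lemma: atoms} are all correct, as is your observation that $\uu(S)$ is an ideal of $\mathcal{G}(S)$ and that the only problematic elements live in $\uu(S)$; what is missing in each direction is a single finite trick, not a Noetherian-type lemma.

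For $S$ Furstenberg $\Rightarrow\mathcal{G}(S)$ Furstenberg, the paper's trick is to apply the Furstenberg property of $S$ \emph{twice}: if the atom $a\in\mathcal{A}(S)$ dividing $s$ is exotic, then $-a\in S^*\setminus(S^\times\cup\mathcal{A}(S))$ by Lemma~\ref{lemma: atoms}(2), so $S$ Furstenberg yields $a_1\in\mathcal{A}(S)$ with $a_1\mid_S -a$; this $a_1$ cannot itself be exotic, because $-a_1\in S$ would make $-a_1$ a proper divisor of $a$ in $S$, contradicting $a\in\mathcal{A}(S)$. Hence $a_1\in\mathcal{A}(\mathcal{G}(S))$ and $a_1\mid_{\mathcal{G}(S)}s$ — no iteration beyond the second step is needed. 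For $\mathcal{G}(S)$ Furstenberg $\Rightarrow S$ Furstenberg, the paper first assumes $s$ is not already an atom of $S$ and writes $s=s_1s_2$ with $s_1,s_2$ nonzero nonunits of $S$, then extracts an irreducible of $\mathcal{G}(S)$ from \emph{each} factor, $s_i=a_is_i'$. If for some $i$ both $a_i,s_i'$ lie in $S$ (or both lie outside $S$), you are done as in your argument; otherwise $-s_1,-s_2\in S$, and the two sign defects cancel against each other: e.g.\ if $a_1\in S$ and $-s_1'\in S$, then $s=a_1\cdot(s_1's_2)$ with $s_1's_2=(-s_1')(-s_2)\in S$, so the genuine atom $a_1\in S\cap\mathcal{A}(\mathcal{G}(S))\subseteq\mathcal{A}(S)$ divides $s$ in $S$. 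Splitting $s$ into two nonunit factors to gain this extra sign freedom is the idea your single-factor analysis lacks, and it disposes of exactly the case $s\in\uu(S)$ that blocks you.
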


\begin{proof}
	Suppose that $S$ is Furstenberg. Let $s$ be a nonzero nonunit element of $\mathcal{G}(S)$, and assume that $s \in S$. We claim that there exists $a \in \mathcal{A}(\mathcal{G}(S))$ such that $a \mid_{\mathcal{G}(S)} s$. Since $S$ is Furstenberg, there exists $a \in \mathcal{A}(S)$ such that $a \mid_S s$. Without loss of generality, assume that $a \notin \mathcal{A}(\mathcal{G}(S))$. By Lemma~\ref{lemma: atoms}, we have that $-a \in S^* \setminus (S^{\times}\cup \mathcal{A}(S))$. Again, since $S$ is Furstenberg, there exists $a_1 \in \mathcal{A}(S)$ such that $a_1 \mid_S -a$. Observe that if $-a_1 \in S$, then $-a_1 \mid_S a$ and $-a_1 \not\simeq_S a$, but this contradicts that $a \in \mathcal{A}(S)$. We can then conclude that $-a_1 \notin S$. By Lemma~\ref{lemma: atoms}, we have that $a_1 \in \mathcal{A}(\mathcal{G}(S))$. Note that $a_1 \mid_{\mathcal{G}(S)} s$, which proves our claim. On the other hand, if $s \notin S$, then $-s \in S$ and there exists $a \in \mathcal{A}(\mathcal{G}(S))$ such that $a \mid_{\mathcal{G}(S)} -s$. This, in turn, implies that $a \mid_{\mathcal{G}(S)} s$. Therefore, $\mathcal{G}(S)$ is Furstenberg.
	
	To tackle the reverse implication, assume that $\mathcal{G}(S)$ is Furstenberg and pick a nonzero nonunit element $s \in S$. There is no loss in assuming that $s$ is not an atom of $S$. Then there exist nonzero nonunit elements $s_1, s_2 \in S$ such that $s = s_1s_2$. Since $\mathcal{G}(S)$ is Furstenberg, there exist $a_1, a_2 \in \mathcal{A}(\mathcal{G}(S))$ such that $a_1 \mid_{\mathcal{G}(S)} s_1$ and $a_2 \mid_{\mathcal{G}(S)} s_2$. In other words, there exist $s'_1, s'_2 \in \mathcal{G}(S)$ such that $s_1 = a_1s'_1$ and $s_2 = a_2s'_2$. If $a_1, s'_1 \in S$ (resp., $a_1, s'_1 \notin S$) or $a_2, s'_2 \in S$ (resp., $a_2, s'_2 \notin S$), then our argument concludes by virtue of Lemma~\ref{lemma: atoms}. Consequently, we may assume that $-a_1s'_1$ and $-a_2s'_2$ are elements of $S$. If $a_1 \in S$, then $-s'_1 \in S$, which implies that $a_1 \mid_S s$, where $a_1 \in S \cap \mathcal{A}(\mathcal{G}(S)) \subseteq \mathcal{A}(S)$ (Lemma~\ref{lemma: atoms}). On the other hand, if $a_1 \not\in S$, then $-a_1 \in S$ and $s'_1 \in S$, which implies that $-a_1 \mid_S s$, where $-a_1 \in S \cap \mathcal{A}(\mathcal{G}(S)) \subseteq \mathcal{A}(S)$. Either way, we can conclude that $S$ is Furstenberg.
\end{proof}

Using Theorem~\ref{theorem: Furstenberg}, we can now provide new instances of Furstenberg semidomains that are not integral domains. Consider the following example.

\begin{example}
	Consider the semidomain $S = \nn_0 + x\qq[x]$, which is not atomic. Indeed, note that $n \mid_S x$ for every $n \in \nn$; consequently, the element $x \in S$ cannot be factored as a finite product of atoms. On the other hand, we have that $\mathcal{G}(S) = \zz + x\qq[x]$, which is Furstenberg by \cite[Lemma~16]{lebowitz}. Since $S$ is a semisubtractive semidomain, $S$ is Furstenberg by Theorem~\ref{theorem: Furstenberg}. 
\end{example}

\section{Atomicity} \label{sec: atomicity}

We now explore the conditions under which a semisubtractive semidomain can be classified as atomic. Furthermore, we provide necessary and sufficient conditions for a semisubtractive semidomain to satisfy the ACCP, be a BFS, and be an FFS.

\begin{lemma} \label{lemma: atomicity}
Let $S$ be a semisubtractive semidomain such that $\mathcal{G}(S)$ is atomic. The following statements hold.
\begin{enumerate}
    \item Each nonunit $s\in S\setminus\mathcal{U}(S)$ is a finite product of atoms of $S$.
    \item For each nonzero nonunit $s\in\mathcal{U}(S)$, either $s$ is a finite product of atoms of $S$ or $-s$ is a finite product of atoms of $S$.
\end{enumerate}
\end{lemma}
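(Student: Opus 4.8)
The plan is to lift an atomic factorization from the atomic domain $\mathcal{G}(S)$ back into $S$ by correcting signs one factor at a time, and then to read off which of $s$ or $-s$ we land on by tracking the product of those signs.

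First I would reduce both parts to a common setup. Given a nonzero nonunit $s \in S$, I claim $s$ is a nonunit of $\mathcal{G}(S)$: if $s \in \mathcal{G}(S)^{\times}$, then $s \in S \cap \mathcal{G}(S)^{\times} = S^{\times}$ by Lemma~\ref{lemma: units}, contradicting that $s$ is a nonunit of $S$. Hence $s$ is a nonzero nonunit of the atomic domain $\mathcal{G}(S)$, so I can write $s = a_1 \cdots a_n$ with $n \geq 1$ and each $a_i \in \mathcal{A}(\mathcal{G}(S))$. Next, for each $i$ I would use semisubtractivity to choose a sign $\epsilon_i \in \{1,-1\}$ with $\epsilon_i a_i \in S$. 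Since $-1$ is a unit of $\mathcal{G}(S)$, each $\epsilon_i a_i$ is still an atom of $\mathcal{G}(S)$, so $\epsilon_i a_i \in S \cap \mathcal{A}(\mathcal{G}(S)) \subseteq \mathcal{A}(S)$ by Lemma~\ref{lemma: atoms}(1). Writing $\epsilon = \prod_{i} \epsilon_i$, the product $(\epsilon_1 a_1) \cdots (\epsilon_n a_n) = \epsilon s$ exhibits $\epsilon s$ as a finite product of atoms of $S$.

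Finally I would split on the value of $\epsilon$. If $\epsilon = 1$, then $s$ is a product of atoms of $S$; if $\epsilon = -1$, then $-s$ is, and in particular $-s \in S$ since $S$ is closed under multiplication. For part (1), where $s \in S \setminus \mathcal{U}(S)$ forces $-s \notin S$, the case $\epsilon = -1$ would yield $-s \in S$, a contradiction; thus $\epsilon = 1$ and $s$ itself factors into atoms of $S$. For part (2), where $s \in \mathcal{U}(S)$, both signs are permitted, and the desired conclusion—that either $s$ or $-s$ is a finite product of atoms of $S$—follows directly from whichever value $\epsilon$ takes.

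I expect the difficulty to be one of bookkeeping rather than a genuine obstacle: the substantive input is Lemma~\ref{lemma: atoms}(1), which guarantees that each sign-corrected factor $\epsilon_i a_i$ is an atom of $S$, and it is this step (resting in turn on Lemma~\ref{lemma: units}) that makes the lifted product a factorization \emph{in} $S$. The only other point requiring care is the reduction showing $s$ is a nonunit of $\mathcal{G}(S)$, which is where the semisubtractive hypothesis enters, again through Lemma~\ref{lemma: units}; once this is in place, the distinction between the two parts is entirely governed by the sign $\epsilon$ and the defining property $-s \notin S$ of $S \setminus \mathcal{U}(S)$.
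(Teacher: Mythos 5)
Your proposal is correct and follows essentially the same route as the paper's proof: lift an atomic factorization from $\mathcal{G}(S)$, correct the sign of each factor using semisubtractivity, invoke Lemma~\ref{lemma: atoms}(1) to see the corrected factors are atoms of $S$, and use the parity of the sign corrections (equivalently, whether $-s \in S$) to decide between the two cases. Your explicit check that $s$ is a nonunit of $\mathcal{G}(S)$ via Lemma~\ref{lemma: units} is a detail the paper leaves implicit, but otherwise the arguments coincide.
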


\begin{proof}
Let $s$ be a nonzero nonunit element of $S$, and let $a_1\cdots a_n$ be a factorization of $s$ in $\mathcal{G}(S)$. Choose $e_1,\ldots,e_n\in\{0,1\}$ such that $a_i'=(-1)^{e_i}a_i$ is an element of $S$ for all $i\in[\![1,n]\!]$. Observe that $a_1'\cdots a_n'=(-1)^{e_1+\cdots+e_n}a$ is an element of $S$. If $s\notin\mathcal{U}(S)$, then $e_1+\cdots+e_n$ is even. Consequently, it follows from Lemma~\ref{lemma: atoms} that $a_1'\cdots a_n'$ is a factorization of $s$ in $S$. Similarly, if $s\in\mathcal{U}(S)$, then $a_1'\cdots a_n'$ is either a factorization of $s$ or $-s$ in $S$.
\end{proof}

We are now in a position to describe semisubtractive semidomains that are atomic.

\begin{theorem} \label{theorem: atomicity iff}
Let $S$ be a semisubtractive semidomain. The following statements are equivalent.
\begin{enumerate}
    \item $S$ is atomic.
    \item $\mathcal{G}(S)$ is atomic and for each $s\in\mathcal{U}(S)$, if $s$ is a finite product of atoms of $S$, then $-s$ is a finite product of atoms of $S$.
\end{enumerate}
\end{theorem}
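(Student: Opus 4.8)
The plan is to prove the two implications separately, observing that the backward implication is essentially immediate from Lemma~\ref{lemma: atomicity}, while the forward implication requires establishing that $\mathcal{G}(S)$ is atomic.

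For $(2) \Rightarrow (1)$, I would assume $\mathcal{G}(S)$ is atomic together with the stated sign condition, and show that every nonzero nonunit $s \in S$ is a finite product of atoms of $S$ by splitting on whether $s \in \mathcal{U}(S)$. If $s \notin \mathcal{U}(S)$, then Lemma~\ref{lemma: atomicity}(1) applies directly. If $s \in \mathcal{U}(S)$, then Lemma~\ref{lemma: atomicity}(2) gives that either $s$ or $-s$ factors into atoms of $S$; in the first case we are done, and in the second case I would apply the sign hypothesis to the element $-s$, which again lies in the additive group $\mathcal{U}(S)$, to recover that $s = -(-s)$ also factors into atoms. This direction presents no real obstacle.

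For $(1) \Rightarrow (2)$, the sign condition is trivial: if $S$ is atomic, then every nonzero nonunit of $S$ is a product of atoms, and for $s \in \mathcal{U}(S)$ both $s$ and $-s$ are nonzero nonunits of $S$ (using Lemma~\ref{lemma: units} to see neither is a unit), so the implication holds vacuously in content. The substance of this direction is proving that $\mathcal{G}(S)$ is atomic. By semisubtractivity every nonzero nonunit of $\mathcal{G}(S)$ equals $\pm t$ for some nonzero nonunit $t \in S$, and since $-1 \in \mathcal{G}(S)^{\times}$ it suffices to factor such a $t$ into atoms of $\mathcal{G}(S)$. I would fix an atomic factorization $t = a_1 \cdots a_n$ in $S$ and reduce to showing that each $a_i$ is a finite product of atoms of $\mathcal{G}(S)$.

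The main obstacle is exactly this last claim, since an atom of $S$ need not be an atom of $\mathcal{G}(S)$ (Example~\ref{ex: atom in S not in gp(S)}), and a naive recursion into factorizations of negatives threatens an infinite descent. The key observation that resolves it is that, by the contrapositive of Lemma~\ref{lemma: atoms}(2), any atom $a \in \mathcal{A}(S)$ with $a \notin \mathcal{U}(S)$ automatically lies in $\mathcal{A}(\mathcal{G}(S))$; hence the only problematic atoms are those in $\mathcal{A}(S) \cap \mathcal{U}(S)$. For such an $a$ with $a \notin \mathcal{A}(\mathcal{G}(S))$, Lemma~\ref{lemma: atoms}(2) gives that $-a$ is a nonzero nonunit non-atom of $S$, so atomicity yields $-a = c_1 \cdots c_k$ with $k \geq 2$ and each $c_j \in \mathcal{A}(S)$. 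I would then argue that no $c_j$ lies in $\mathcal{U}(S)$: if some $c_j \in \mathcal{U}(S)$, then $-c_j \in S$ and the identity $a = (-c_j)\prod_{i \neq j} c_i$ would express the atom $a$ as a product of two nonunits of $S$ (Lemma~\ref{lemma: units} shows $-c_j$ is a nonunit, and the remaining product is a nonunit as $k \geq 2$), contradicting $a \in \mathcal{A}(S)$. Thus every $c_j$ lies outside $\mathcal{U}(S)$ and is therefore an atom of $\mathcal{G}(S)$, so $a = (-c_1)c_2 \cdots c_k$ exhibits $a$ as a finite product of atoms of $\mathcal{G}(S)$. Substituting these factorizations into $t = a_1 \cdots a_n$ and absorbing the overall sign into a single atomic factor completes the proof. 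The crucial point, which dissolves the feared infinite descent, is that this recursion terminates in a single step precisely because the factors of $-a$ are forced out of $\mathcal{U}(S)$.
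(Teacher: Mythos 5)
Your proof is correct and follows essentially the same route as the paper's: the reverse implication via Lemma~\ref{lemma: atomicity}, and the forward implication by showing that every atom of $S$ is, up to sign, a finite product of atoms of $\mathcal{G}(S)$, using Lemma~\ref{lemma: atoms}(2) together with a contradiction against $a\in\mathcal{A}(S)$. The only cosmetic difference lies in how that contradiction is reached: the paper factors the offending atom $-a_j$ once more to rewrite $a$ as a product of at least three atoms of $S$, whereas you obtain the nontrivial factorization $a=(-c_j)\prod_{i\neq j}c_i$ directly by flipping a sign --- both arguments terminate after a single step.
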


\begin{proof}
    Suppose that $S$ is atomic, and let $a \in \mathcal{A}(S)$. If $a \not\in\mathcal{A}(\mathcal{G}(S))$, then $-a \in S^* \setminus (S^{\times}\cup \mathcal{A}(S))$ by Lemma~\ref{lemma: atoms}. Since $S$ is atomic, we have $-a = a_1 \cdots a_n$, where $n \in \nn_{>1}$ and $a_i \in \mathcal{A}(S)$ for every $i \in \llbracket 1,n \rrbracket$. By way of contradiction, assume that there exists $j \in \llbracket 1,n \rrbracket$ such that $a_j \not\in\mathcal{A}(\mathcal{G}(S))$. By Lemma~\ref{lemma: atoms}, we have $-a_j = a_1' \cdots a_k'$ with $k \in \nn_{>1}$ and $a_t' \in \mathcal{A}(S)$ for all $t \in \llbracket 1,k \rrbracket$. Thus, 
	\[
	a = a_1 \cdots a_{j - 1}(a_1' \cdots a_k')a_{j + 1} \cdots a_n,
	\]
	but this contradicts that $a \in \mathcal{A}(S)$ since $k + n > 2$. Hence if $a \in \mathcal{A}(S)$, then either $a \in \mathcal{A}(\mathcal{G}(S))$ or $-a \in \langle S \cap \mathcal{A}(\mathcal{G}(S)) \rangle$. Now let $s$ be a nonzero nonunit element of $\mathcal{G}(S)$. If $s \in S$, then $s = a_1 \cdots a_n$ with $a_i \in \mathcal{A}(S)$ for each $i \in \llbracket 1,n \rrbracket$. By our previous observation, either $s \in \langle \mathcal{A}(\mathcal{G}(S)) \rangle$ or $-s \in \langle \mathcal{A}(\mathcal{G}(S)) \rangle$. In any case, we have that $s \in \langle \mathcal{A}(\mathcal{G}(S)) \rangle$. On the other hand, if $s \not\in S$, then $-s \in S$, and we proceed similarly. Therefore, $\mathcal{G}(S)$ is atomic. Note that the latter condition in (2) is clearly satisfied, as $S$ is atomic.

    The reverse implication is an immediate consequence of Lemma~\ref{lemma: atomicity}.
\end{proof}

The rest of this section is devoted to the study of the ACCP and the bounded and finite factorization properties in the context of semisubtractive semidomains. We start by stating a well-known characterization of BFMs.

\begin{definition}
	Given a monoid $M$, a function $\ell\colon M \to \nn_0$ is a \emph{length function} of $M$ if it satisfies the following two properties:
	\begin{enumerate}
		\item[(i)] for every $u\in M$, $\ell(u) = 0$ if and only if $u \in M^{\times}$;
		
		\item[(ii)] $\ell(bc) \geq \ell(b) + \ell(c)$ for every $b,c \in M$.
	\end{enumerate}
\end{definition} 

As we mentioned before, the following result is well known.

\begin{prop}[\cite{fHK92}, Theorem~1] \label{prop: length function if and only if BFM}
	A monoid $M$ is a BFM if and only if there is a length function $\ell\colon M \to \nn_0$.
\end{prop}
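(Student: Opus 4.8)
The plan is to prove both directions of the equivalence, where the nontrivial content lies almost entirely in one direction. The statement is the classical Proposition characterizing BFMs via length functions, so I would treat it as a standard argument and keep it compact.

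For the easy direction, I would assume $M$ is a BFM and construct a length function directly. The natural candidate is
\[
\ell(b) = \max \mathsf{L}(b)
\]
for each nonunit $b$, setting $\ell(u) = 0$ for units $u \in M^{\times}$. Because $M$ is a BFM, $\mathsf{L}(b)$ is a finite (and nonempty, once we know BFMs are atomic) subset of $\nn_0$, so the maximum exists and $\ell$ is well defined with values in $\nn_0$. Property (i) holds since $b$ is a unit exactly when its only factorization is empty, giving $\ell(b) = 0$, and a nonunit has a factorization of positive length. For property (ii), given factorizations $z_b \in \mathsf{Z}(b)$ and $z_c \in \mathsf{Z}(c)$ realizing the maximal lengths, their product $z_b z_c$ is a factorization of $bc$ of length $|z_b| + |z_c| = \ell(b) + \ell(c)$, so $\ell(bc) \geq \ell(b) + \ell(c)$ follows immediately. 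The one subtlety I would flag here is that this construction presupposes atomicity, so I must first recall (as the excerpt notes) that a BFM satisfies the ACCP and is therefore atomic, ensuring every element has at least one factorization and $\mathsf{L}(b) \neq \emptyset$.

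For the main direction, I would assume a length function $\ell\colon M \to \nn_0$ exists and show $M$ is a BFM, meaning $\mathsf{L}(b)$ is finite for every $b$. The key observation is that property (ii), applied inductively to a factorization $b = a_1 \cdots a_n$ into atoms, yields
\[
\ell(b) = \ell(a_1 \cdots a_n) \geq \ell(a_1) + \cdots + \ell(a_n) \geq n,
\]
where the last inequality uses that each atom $a_i$ is a nonunit, so $\ell(a_i) \geq 1$ by property (i). Hence every factorization length $n$ is bounded above by the fixed number $\ell(b)$, which forces $\mathsf{L}(b) \subseteq \llbracket 0, \ell(b) \rrbracket$ to be finite. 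A preliminary point is that the existence of a length function also implies $M$ is atomic (indeed it implies the ACCP: a proper division $c \mid b$ with $c \not\simeq b$ strictly decreases $\ell$, so no infinite strictly descending chain of principal ideals can occur), which guarantees $\mathsf{Z}(b) \neq \emptyset$ so that the BFM condition is stated over a genuinely atomic monoid.

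The main obstacle, such as it is, is bookkeeping rather than genuine difficulty: I must be careful to establish atomicity before invoking factorizations in either direction, since the definitions of $\mathsf{L}(b)$ and $\mathsf{Z}(b)$ in the excerpt are given only for atomic monoids. Once atomicity is secured, both inequalities are routine consequences of the superadditivity in property (ii) and the positivity of $\ell$ on atoms forced by property (i). Since this is a well-known result attributed to \cite{fHK92}, I would present the two directions concisely and would not belabor the verification that the descending-chain argument gives the ACCP.
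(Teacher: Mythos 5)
Your argument is correct and complete. Note that the paper does not prove this proposition at all: it is quoted as \cite[Theorem~1]{fHK92} and used as a black box, so there is no internal proof to compare against. Your two directions are the standard ones: taking $\ell(b) = \max \mathsf{L}(b)$ (well defined once atomicity of a BFM is recalled) gives a length function, and conversely superadditivity forces $n \le \ell(a_1)+\cdots+\ell(a_n) \le \ell(b)$ for any atomic factorization of length $n$, so $\mathsf{L}(b)$ is finite; your preliminary check that a length function yields the ACCP, hence atomicity, is exactly the point that needs to be made before $\mathsf{Z}(b)$ and $\mathsf{L}(b)$ can even be invoked, and you handle it correctly.
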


We can now provide necessary and sufficient conditions for a semisubtractive semidomain to satisfy the ACCP, be a BFS, and be an FFS.

\begin{theorem} \label{theorem: ACCP, BFP, and FFP}
	Let $S$ be a semisubtractive semidomain. The following statements hold.
	\begin{enumerate}
		\item $S$ satisfies the ACCP if and only if $\mathcal{G}(S)$ satisfies the ACCP.
		\item $S$ is a BFS if and only if $\mathcal{G}(S)$ is a BFD.
		\item $S$ is an FFS if and only if $\mathcal{G}(S)$ is an FFD.
	\end{enumerate}
\end{theorem}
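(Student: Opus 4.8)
The plan is to prove each of the three equivalences by transferring the relevant witness between $S$ and its domain of differences $\mathcal{G}(S)$, reconciling the two notions of unit through Lemma~\ref{lemma: units} ($S^{\times} = S \cap \mathcal{G}(S)^{\times}$) and the two notions of divisibility through the semisubtractive hypothesis. In every statement the implication ``$\mathcal{G}(S)$ has the property $\Rightarrow$ $S$ has the property'' is routine: a witness for $\mathcal{G}(S)$ restricts to one for $S$. The reverse implications require lifting data from $S$ to $\mathcal{G}(S)$, and the only genuine difficulty throughout is the sign ambiguity of semisubtractivity, namely that a quotient of two elements of $S$ need only lie in $S$ up to a global sign (as in Example~\ref{ex: atom in S not in gp(S)}). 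The recurring tool for controlling this will be the set $\mathcal{U}(S) = S \cap (-S)$ of additively invertible elements, which one checks is an ideal of the ring $\mathcal{G}(S)$ contained in $S$.

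For part (1), the backward direction is direct: a sequence $(b_i)$ in $S^*$ with $b_{i+1} \mid_S b_i$ is also a chain in $\mathcal{G}(S)^*$, so it stabilizes up to $\mathcal{G}(S)$-associates from some index $k$ on; combining $b_n \mid_S b_k$ with $b_n \simeq_{\mathcal{G}(S)} b_k$ and cancelling in the domain $\mathcal{G}(S)$ shows the connecting factor is a $\mathcal{G}(S)$-unit lying in $S$, hence in $S^{\times}$ by Lemma~\ref{lemma: units}, so $b_n \simeq_S b_k$. The forward direction is the crux. Given a strictly ascending chain $\mathcal{G}(S)b_1 \subsetneq \mathcal{G}(S)b_2 \subsetneq \cdots$, I would pick $S$-representatives $c_i \in \{b_i,-b_i\} \cap S$ and examine the quotients $c_i/c_{i+1} \in \mathcal{G}(S)^*$. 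If $c_i/c_{i+1} \in S$ this step yields a proper divisibility $c_{i+1} \mid_S c_i$; otherwise $-c_i = c_{i+1}\cdot\bigl(-(c_i/c_{i+1})\bigr) \in S$, forcing $c_i \in \mathcal{U}(S)$, and then, since $\mathcal{U}(S)$ is an ideal of $\mathcal{G}(S)$ and every earlier $c_j$ is a $\mathcal{G}(S)$-multiple of $c_i$, all earlier terms also lie in $\mathcal{U}(S)$. Thus the set of ``bad'' indices is downward closed, and I split into two cases: if it is finite, the tail of $(c_i)$ is already a strictly ascending chain of principal ideals in $S$; if it is infinite, every $b_i \in \mathcal{U}(S)$, so both signs are available and I can inductively fix signs $\eta_i$ with $\eta_i b_i \in S$ and $\eta_{i+1}b_{i+1} \mid_S \eta_i b_i$ properly. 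Either way one obtains a strictly ascending chain of principal ideals in $S$, contradicting the ACCP in $S$.

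For part (2) I invoke Proposition~\ref{prop: length function if and only if BFM}. If $\mathcal{G}(S)$ is a BFD with length function $\ell$, then $\ell|_{S^*}$ is a length function of $S^*$ (condition (i) again using Lemma~\ref{lemma: units}), so $S$ is a BFS. Conversely, given a length function $\ell$ on $S^*$, I would define $L\colon \mathcal{G}(S)^* \to \nn_0$ by $L(s) = \max\{\ell(\epsilon s) : \epsilon \in \{1,-1\},\ \epsilon s \in S\}$, a maximum over a nonempty finite set. Condition (i) is immediate because $\epsilon s$ is a unit of $S$ exactly when $s$ is a unit of $\mathcal{G}(S)$; condition (ii) is where choosing $\max$ rather than $\min$ is essential: picking signs $\sigma,\tau$ that realize $L(s)$ and $L(t)$, the sign $\sigma\tau$ is admissible for $st$, whence $L(st) \geq \ell(\sigma\tau st) \geq \ell(\sigma s)+\ell(\tau t) = L(s)+L(t)$. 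Thus $L$ is a length function and $\mathcal{G}(S)$ is a BFD.

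For part (3) I would use the characterization of the finite factorization property by finiteness of the set of divisors up to associates. Backward direction: every $S$-divisor of $s \in S^*$ is a $\mathcal{G}(S)$-divisor, and each $\mathcal{G}(S)$-associate class of divisors of $s$ meets $S$ in at most two $S$-associate classes; indeed, if three $S$-divisors in one class were pairwise non-$S$-associate, each of the three ratios would lie outside $S$ so that its negative lies in $S$, yet the product of two of these ratios equals the third, forcing that ratio into $S$, a contradiction. Hence finitely many $\mathcal{G}(S)$-classes give finitely many $S$-classes. Forward direction: replacing $s$ by $-s$ if necessary I may assume $s \in S^*$, pick for each $\mathcal{G}(S)$-associate class of divisors of $s$ a representative $\tilde d \in S$, and write $s = \tilde d\,e$ with $\pm e \in S$; then $\tilde d$ divides $s$ in $S$ (if $e \in S$) or divides $-s$ in $S$ (if $-e \in S$, which forces $s \in \mathcal{U}(S)$). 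Representatives of distinct $\mathcal{G}(S)$-classes are never $S$-associate, so their number is bounded by the finitely many $S$-divisor classes of $s$ together with those of $-s$, and $\mathcal{G}(S)$ is an FFD. The main obstacle in the whole argument is the forward direction of (1): unlike the length-function and divisor-counting arguments, the ACCP lift cannot be made sign-uniform in a single stroke and genuinely requires the dichotomy governed by the ideal $\mathcal{U}(S)$.
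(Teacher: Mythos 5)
Your proposal is correct, and while it follows the same overall strategy as the paper (transfer everything through Lemma~\ref{lemma: units} and control the sign ambiguity of semisubtractivity), each of the three parts is executed by a genuinely different device. For the forward direction of (1), the paper assumes the connecting factors $s_i'$ fail to lie in $S$ infinitely often and recursively pairs up consecutive ``bad'' factors, negating both so that the grouped product lands in $S$; you instead observe that a single bad step forces the term into $\mathcal{U}(S)$, that $\mathcal{U}(S)$ is an ideal of $\mathcal{G}(S)$, and hence that the chain splits into a clean dichotomy (finitely many bad steps, so the tail is a chain in $S$; or all terms in $\mathcal{U}(S)$, so signs can be fixed freely). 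Your route makes the case analysis explicit where the paper's pairing construction is somewhat delicate to write down, at the modest cost of the auxiliary fact about $\mathcal{U}(S)$ (which does hold: semisubtractivity gives closure under multiplication by arbitrary elements of $\mathcal{G}(S)$); note only that what is literally downward closed is membership of $c_i$ in $\mathcal{U}(S)$ rather than badness itself, but your argument only uses that unboundedly many bad indices push every term into $\mathcal{U}(S)$, which is what you prove. For (2), the paper's length function is $\ell^2(s)=\ell(s^2)$, a canonical choice that avoids selecting signs, whereas your $L(s)=\max\{\ell(\epsilon s)\}$ requires the (correct) observation that $\max$, not $\min$, makes superadditivity go through; both work. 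For (3), the paper proves $(\mathcal{G}(S)^{\times}:S^{\times})=2$ and invokes \cite[Theorem~1.5.6(2)]{GH06} for one direction and argues by contrapositive with a constant-sign subsequence for the other; your ``at most two $S$-associate classes per $\mathcal{G}(S)$-associate class'' counting is essentially a self-contained proof of the cited finiteness result, and your direct representative-picking replaces the contrapositive. The net effect is a proof that is more elementary (fewer external citations) and, in part (1), structurally cleaner than the paper's.
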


\begin{proof}
	The reverse implication of part $(1)$ follows from Lemma~\ref{lemma: units}. Suppose now that $S$ satisfies the ACCP. Let $s_1, s_2, \ldots$ be a sequence of elements of $\mathcal{G}(S)^*$ such that, for each $i \in \nn$, there exists $s'_{i + 1} \in \mathcal{G}(S)$ satisfying that $s_i = s_{i + 1} s'_{i + 1}$. There is no loss in assuming that $s_i \in S$ for every $i \in \nn$ given that $S$ is semisubtractive. Since $S$ satisfies the ACCP and the equality $S^{\times} = S \cap \mathcal{G}(S)^{\times}$ holds, we may further assume that for every $k \in \nn$ there exists $m \in \nn_{>k}$ such that $s'_m \not\in S$. Next we will define subsequences of $(s_n)_{n \in \nn}$ and $(s'_n)_{n \in \nn_{>1}}$ recursively. Keep in mind that the latter subsequence will be obtained as a byproduct of the former. Let $s_{k_1} = s_1$. Suppose that, for some $n \in \nn$, we already defined $s_{k_n} = s_m$ for some $m \in \nn$. Let $u,v \in \nn_{>m}$ be indices satisfying that $u < v$, the elements $s'_u$ and $s'_v$ are not in $S$, and $s'_t \in S$ for every $t \in \llbracket m + 1, v \rrbracket \setminus \{u,v\}$. Thus,
	\[
		s_{k_n} = s_m = s_vs'_v \cdots s'_u \cdots s'_{m + 1} = s_v(-s'_v)\cdots (-s'_u) \cdots s'_{m + 1}.
	\] 
	Set $s_{k_{n + 1}} \coloneqq s_v$ and $s'_{k_{n + 1}}\coloneqq (-s'_v)\cdots (-s'_u) \cdots s'_{m + 1}$. Clearly, $(s_{k_n})_{n \in \nn}$ is a sequence of elements of $S$ satisfying that $s_{k_{n + 1}} \mid_S s_{k_n}$ for every $n \in \nn$. Since $S$ satisfies the ACCP, there exists $m \in \nn$ such that $s_{k_n} \simeq_S s_{k_m}$ for every $n \in \nn_{\geq m}$. This implies that $(s_n)_{n \in \nn}$ (as a sequence of elements of $\mathcal{G}(S)$) eventually stabilizes by Lemma~\ref{lemma: units}. Hence $\mathcal{G}(S)$ satisfies the ACCP.
	
	Note that the reverse implication of part $(2)$ follows from Lemma~\ref{lemma: units} and \cite[Corollary~1.3.3]{GH06}. To tackle the forward implication, assume that $S^{*}$ is a BFM. By Proposition~\ref{prop: length function if and only if BFM}, there exists a length function $\ell \colon S^{*} \to \nn_0$. Consider the function $\ell^2\colon \mathcal{G}(S)^* \to \nn_0$ given by $\ell^2(s) = \ell(s^2)$, which is well defined because $S$ is semisubtractive. For $u \in \mathcal{G}(S)^{\times}$, we have that $u^2 \in S \cap \mathcal{G}(S)^{\times}$. By Lemma~\ref{lemma: units}, we have that $\ell^2(u) = \ell(u^2) = 0$. Conversely, let $s \in \mathcal{G}(S)$ such that $\ell^2(s) = 0$. Thus, for some $e \in \{0,1\}$, we have
	\[
		0 = \ell^2(s) = \ell(s^2) \geq 2\ell((-1)^es),
	\]
	where $(-1)^es \in S$. This implies that $\ell((-1)^es) = 0$. In other words, $(-1)^es$ is a unit of $S$. We can then conclude that $s \in \mathcal{G}(S)^{\times}$. Moreover, for $s_1, s_2 \in \mathcal{G}(S)^*$, we have
	\[
		\ell^2(s_1s_2) = \ell\left(s_1^2s_2^2\right) \geq \ell\left(s_1^2\right) + \ell\left(s_2^2\right) = \ell^2(s_1) + \ell^2(s_2).
	\]
	As a consequence, we have that $\ell^2$ is a length function. Then the domain $\mathcal{G}(S)$ is a BFD.
	
	Without loss of generality, we can assume that $S$ is not an integral domain; otherwise, $\mathcal{G}(S) = S$. Take $u \in \mathcal{G}(S)^{\times} \setminus S^{\times}$\!, and let $s \in \mathcal{G}(S)^{\times}$ such that $us = 1$. Note that $-u \in S$ by virtue of Lemma~\ref{lemma: units}, which implies that $s \not\in S$. This, in turn, implies that $-s \in S$, so $-u \in S^{\times}$. Hence $(\mathcal{G}(S)^{\times} : S^{\times}) = 2 < \infty$, and then the reverse implication of $(3)$ follows from \cite[Theorem~1.5.6(2)]{GH06}. 
	
	Conversely, suppose that $\mathcal{G}(S)$ is not an FFD. By \cite[Proposition~1.5.5(1)]{GH06}, there exists $s \in \mathcal{G}(S)^*$ such that $s$ has infinitely many non-associated divisors in $\mathcal{G}(S)^*$. Let $(s_n)_{n \in \nn}$ be a sequence of non-associated divisors of $s$ in $\mathcal{G}(S)^*$, i.e., for each $n \in \nn$ there exists $s'_n \in \mathcal{G}(S)^*$ such that $s = s_ns'_n$ and $s_n \not\simeq_{\mathcal{G}(S)} s_m$ for $n \neq m$. For every $n \in \nn$, let $e_n, k_n \in \{0,1\}$ such that $(-1)^{e_n}s_n$ and $(-1)^{k_n}s'_n$ are both elements of $S$. By possibly taking a subsequence of $(s_n)_{n \in \nn}$, there is no loss in assuming that $k_i = k_j$ and $e_i = e_j$ for $i,j \in \nn$. Since $s_i \not\simeq_{\mathcal{G}(S)} s_j$ for $i \neq j$, we have that $(-1)^{e_i}s_i \not\simeq_S (-1)^{e_j}s_j$ for $i \neq j$ by Lemma~\ref{lemma: units}. Hence the element $s' = (-1)^{e_i}s_i(-1)^{k_i}s'_i \in S$ has infinitely many non-associated divisors in $S$, namely $(-1)^{e_i}s_i$ for each $i \in \nn$. Therefore, $S$ is not an FFS by virtue of \cite[Proposition~1.5.5(1)]{GH06}.
\end{proof}

We conclude this section by offering some examples that distinguish the properties considered in Theorem~\ref{theorem: ACCP, BFP, and FFP}.
\begin{example}
	Let $D = \rr + x \cc[x]$. It is known that $D$ is a BFD (in fact, it is half-factorial) that is not an FFD (see, for instance, \cite[Example~4.10]{AG22}). Let $S = \rr_{\geq 0} + x\cc[x]$, which is clearly a semisubtractive semidomain satisfying that $\mathcal{G}(S) \cong D$. Therefore, the semidomain $S$ is a BFS that is not an FFS by Theorem~\ref{theorem: ACCP, BFP, and FFP}.
\end{example}

\begin{example}
	Given a semidomain $S$ and a torsion-free monoid $M$ written additively, consider the set $S[M]$ consisting of all maps $f\colon M \to S$ satisfying that the set $\{m \in M \mid f(m) \neq 0\}$ is finite. Addition and multiplication in $S[M]$ are defined as for polynomials and, under these operations, $S[M]$ is a semidomain (see \cite{chapmanpolo} and \cite{rG84} for more details about this construction). Let $M = \langle 1/p \mid p \in \mathbb{P} \rangle$, and consider $D = \qq[M]$. The integral domain $D$ satisfies the ACCP but it is not a BFD (\cite[Example~4.8]{AG22}). By virtue of Theorem~\ref{theorem: ACCP, BFP, and FFP}, the semisubtractive semidomain $S = \{f \in \qq[M] \mid f(0) \geq 0\}$ satisfies the ACCP but is not a BFS.
\end{example}

\section{Factoriality}
\label{sec: factoriality}

In Example~\ref{ex: atom in S not in gp(S)}, we consider the semisubtractive semidomain
\[
	S = \left\{c_nx^n + \cdots + c_1x + c_0 \in \zz[x] \mid \text{ either } c_0 > 0 \text{ or } c_1 \geq c_0 = 0\right\}\!
\]
whose Grothendieck group is a UFD, namely $\zz[x]$. Note that, even though $x \in \mathcal{A}(S)$ by Lemma~\ref{lemma: atoms}, $x$ is not a prime element of $S$. In fact, we have that $x \,|_{S}\, x^4$ but $x \nmid_S -x^2$. Consequently, the semidomain $S$ is not a UFS. So, in general, the unique factorization property does not descend from $\mathcal{G}(S)$ to $S$.

Next, we show that a semisubtractive semidomain $S$ is a UFS if and only if the integral domain $\mathcal{G}(S)$ is a UFD and $\mathcal{U}(S)\in\{\{0\},S\}$.

\begin{lemma}\label{lemma: primes ascend/descend}
Let $S$ be a semisubtractive semidomain, and let $p\in S$. The following statements hold.
\begin{enumerate}
    \item If $p$ is a prime element of $S$, then $p$ is a prime element of $\mathcal{G}(S)$.
    \item If $p$ is a prime element of $\mathcal{G}(S)$ and $\mathcal{U}(S)\in\{\{0\},S\}$, then $p$ is a prime element of $S$.
\end{enumerate}
\end{lemma}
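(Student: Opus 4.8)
The plan is to shuttle divisibility relations between $S$ and $\mathcal{G}(S)$, relying on two facts available to us: the equality $S^{\times} = S \cap \mathcal{G}(S)^{\times}$ from Lemma~\ref{lemma: units}, and semisubtractivity, which guarantees that every element of $\mathcal{G}(S)$ or its negative lies in $S$ and, in particular, that the square of every element of $\mathcal{G}(S)$ belongs to $S$. Throughout I will read ``prime element of $S$'' as prime in the multiplicative monoid $S^{*}$, so that in any product $ab$ under consideration I may assume $a,b \in S^{*}$.

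For part~(1), I would first verify that a prime $p$ of $S$ is a nonzero nonunit of $\mathcal{G}(S)$: if $p$ were a unit of $\mathcal{G}(S)$, then $p \in S \cap \mathcal{G}(S)^{\times} = S^{\times}$ by Lemma~\ref{lemma: units}, contradicting that $p$ is a nonunit of $S$. Now suppose $p \mid_{\mathcal{G}(S)} ab$; replacing $a$ and $b$ by their negatives if necessary (which changes neither the hypothesis nor the conclusion, since $-1 \in \mathcal{G}(S)^{\times}$), I may assume $a,b \in S$. Writing $ab = pc$ with $c \in \mathcal{G}(S)$, the decisive move is to square: $a^{2}b^{2} = p^{2}c^{2}$, and since $a^{2},b^{2},c^{2} \in S$ we obtain $a^{2}b^{2} = p\,(pc^{2})$ with $pc^{2} \in S$, so that $p \mid_S a^{2}b^{2}$. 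Applying primality of $p$ in $S$ to the factorization $a^{2}b^{2} = a^{2}\cdot b^{2}$ gives, say, $p \mid_S a^{2} = a\cdot a$, and a second application yields $p \mid_S a$; hence $p \mid_{\mathcal{G}(S)} a$.

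For part~(2), I would split on the hypothesis $\mathcal{U}(S) \in \{\{0\}, S\}$. If $\mathcal{U}(S) = S$, then $(S,+)$ is a group, so $S = \mathcal{G}(S)$ and there is nothing to prove. If $\mathcal{U}(S) = \{0\}$, then $S \cap (-S) = \{0\}$. First, $p$ is a nonunit of $S$ (else $p \in \mathcal{G}(S)^{\times}$, contradicting primality in $\mathcal{G}(S)$), and $p \neq 0$. Suppose $p \mid_S ab$ with $a,b \in S^{*}$. Then $p \mid_{\mathcal{G}(S)} ab$, and primality in $\mathcal{G}(S)$ gives, say, $a = pc$ with $c \in \mathcal{G}(S)$. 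If $c \notin S$ then $-c \in S$, whence $-a = p(-c) \in S$; combined with $a \in S$ this forces $a \in \mathcal{U}(S) = \{0\}$, contradicting $a \neq 0$. Therefore $c \in S$ and $p \mid_S a$.

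The only genuine obstacle is in part~(1): when $p \mid_{\mathcal{G}(S)} ab$ with $a,b \in S$, the cofactor $c$ need not lie in $S$, so one cannot feed the relation $ab = pc$ directly into the primality of $p$ in $S$. The squaring device sidesteps this precisely because $c^{2} \in S$ automatically, converting the relation into one taking place entirely inside $S$. In part~(2), the hypothesis on $\mathcal{U}(S)$ is exactly what is needed to pin down the sign of the cofactor; the semidomain of Example~\ref{ex: atom in S not in gp(S)}, where $x$ is prime in $\mathcal{G}(S) = \zz[x]$ but not in $S$, shows that this hypothesis cannot be omitted.
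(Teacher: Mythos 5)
Your proof is correct and follows essentially the same route as the paper: part~(1) hinges on the same squaring device (passing from $ab = pc$ to $a^2b^2 = p\,(pc^2)$ with $pc^2 \in S$, then applying primality in $S$ twice), merely organized as a direct argument rather than by contradiction, and part~(2) is the same case split on $\mathcal{U}(S)$ with the same sign analysis of the cofactor. No gaps.
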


\begin{proof}
    Suppose towards a contradiction that $p$ is a prime element of $S$ that is not prime in $\mathcal{G}(S)$. So, there exist $s,s' \in \mathcal{G}(S)$ such that $p \mid_{\mathcal{G}(S)} ss'$ but $p \nmid_{\mathcal{G}(S)} s$ and $p \nmid_{\mathcal{G}(S)} s'$. Let $s_1,s_2 \in S$ such that $s_1^2 = s^2$ and $s_2^2 = (s')^2$. Note that $p \nmid_S s_1$ and $p \nmid_S s_2$. Since $p$ is prime in $S$ and $p \mid_{\mathcal{G}(S)} s_1s_2$, there is no loss in assuming that there exists $s_3\in\mathscr{G}(S)$ such that $-s_3\in S$ and $ps_3=s_1s_2$. Thus,
	\[
	s_1^2s_2^2 = p^2(-s_3)^2.
	\]
	Since $p$ is a prime element of $S$, we have that either $p \mid_S s_1$ or $p \mid_S s_2$, a contradiction.

    As for statement (2), let $p$ be a prime element of $\mathcal{G}(S)$. If $\mathcal{U}(S)=S$, we have that $S=\mathcal{G}(S)$, so it follows that $p$ is a prime element of $S$. On the other hand, suppose $\mathcal{U}(S)=\{0\}$. It is easy to see that $p$ is a nonzero nonunit of $S$ (Lemma~\ref{lemma: units}). Now, let $s_1,s_2\in S$ such that $p\mid_S s_1s_2$. Then $p\mid_{\mathcal{G}(S)}s_1s_2$, so we may assume without loss of generality that $p\mid_{\mathcal{G}(S)}s_1$. In other words, there exists some $s'\in\mathcal{G}(S)$ such that $ps'=s_1$. If $-s'\in S$, then observe that $p(-s')=-s_1\in S$ which implies that $s_1\in\mathscr{U}(S)$, and hence $p\mid_S 0=s_1$. If $-s^{\prime}\not\in S$, then $s^{\prime}\in S$, and so it follows that $p\mid_S s_1$.
\end{proof}

\begin{theorem} \label{theorem: UFP}
	Let $S$ be a semisubtractive semidomain. Then $S$ is a UFS if and only if $\mathcal{G}(S)$ is a UFD and $\mathcal{U}(S)\in \{\{0\},S\}$.
\end{theorem}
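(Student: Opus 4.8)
The plan is to prove both implications using the standard criterion that a monoid is factorial precisely when it is atomic and every atom is prime, together with Lemma~\ref{lemma: primes ascend/descend}, which transfers primality between $S$ and $\mathcal{G}(S)$, and the atomicity transfer of Theorem~\ref{theorem: atomicity iff}.

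For the reverse implication, suppose $\mathcal{G}(S)$ is a UFD and $\mathcal{U}(S)\in\{\{0\},S\}$. If $\mathcal{U}(S)=S$, then $(S,+)$ is a group, so $S=\mathcal{G}(S)$ and the conclusion is immediate. If $\mathcal{U}(S)=\{0\}$, I would first invoke Theorem~\ref{theorem: atomicity iff} to conclude that $S$ is atomic, noting that its extra hypothesis is vacuous since $\mathcal{U}(S)=\{0\}$. The key observation is that, when $\mathcal{U}(S)=\{0\}$, every atom of $S$ is already an atom of $\mathcal{G}(S)$: indeed, if $a\in\mathcal{A}(S)\setminus\mathcal{A}(\mathcal{G}(S))$, then $-a\in S$ by Lemma~\ref{lemma: atoms}, so $a\in\mathcal{U}(S)=\{0\}$, which is absurd. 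Since $\mathcal{G}(S)$ is a UFD, each such atom is prime in $\mathcal{G}(S)$, hence prime in $S$ by Lemma~\ref{lemma: primes ascend/descend}(2). Thus every nonzero nonunit of $S$ is a product of primes, and $S$ is a UFS.

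For the forward implication, assume $S$ is a UFS. Then $S$ is atomic, so $\mathcal{G}(S)$ is atomic by Theorem~\ref{theorem: atomicity iff}. To see that $\mathcal{G}(S)$ is a UFD, I would show that every atom of $\mathcal{G}(S)$ is prime: given $q\in\mathcal{A}(\mathcal{G}(S))$, semisubtractivity lets me assume $q\in S$, whence $q\in\mathcal{A}(S)$ by Lemma~\ref{lemma: atoms}(1); since $S$ is a UFS, $q$ is prime in $S$, and then prime in $\mathcal{G}(S)$ by Lemma~\ref{lemma: primes ascend/descend}(1). Hence $\mathcal{G}(S)$ is atomic with every atom prime, so it is a UFD.

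The main obstacle is establishing $\mathcal{U}(S)\in\{\{0\},S\}$, and I would argue that $\mathcal{U}(S)\neq\{0\}$ forces $S=\mathcal{G}(S)$. Pick a nonzero $w\in\mathcal{U}(S)$, so that both $w,-w\in S$; the goal is to produce $-1\in S$, since $-1\in S$ implies $S$ is closed under negation and hence $S=\mathcal{G}(S)$. If $w$ or $-w$ is a unit of $S$, then $-1=(-w)w^{-1}\in S$ (respectively $-1=w(-w)^{-1}\in S$), and we are done. Otherwise both $w$ and $-w$ are nonzero nonunits; factoring each into atoms of $S$ and comparing the two resulting factorizations $w\cdot w$ and $(-w)(-w)$ of $w^2$, unique factorization forces them to agree up to order and associates. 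Since doubling a multiset of atoms is an injective operation, the atom multisets of $w$ and $-w$ coincide, giving $w\simeq_S -w$; writing $-w=uw$ with $u\in S^{\times}$ and cancelling the nonzero $w$ in $\mathcal{G}(S)$ yields $u=-1\in S$, as needed. This comparison of the two factorizations of $w^2$ is the crux of the argument, with the remaining steps being routine verifications.
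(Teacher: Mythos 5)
Your proposal is correct and follows essentially the same route as the paper: both directions hinge on Lemma~\ref{lemma: primes ascend/descend} to transfer primality between $S$ and $\mathcal{G}(S)$, and the condition $\mathcal{U}(S)\in\{\{0\},S\}$ is extracted by the same squaring trick (comparing the two atomic factorizations of $w^2$ arising from $w$ and $-w$ and concluding $-1\in S^{\times}$). The only differences are organizational: you route the reverse implication through Theorem~\ref{theorem: atomicity iff} together with the observation that $\mathcal{A}(S)\subseteq\mathcal{A}(\mathcal{G}(S))$ when $\mathcal{U}(S)=\{0\}$, where the paper instead adjusts signs in a prime factorization of $\mathcal{G}(S)$ directly.
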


\begin{proof}
Suppose $S$ is a UFS, and let $s$ be a nonzero nonunit element of $\mathcal{G}(S)$. Without loss of generality, assume that $s \in S$. Since $S$ is a UFS, we can write $s = p_1 \cdots p_n$, where $p_1, \ldots, p_n$ are atoms, and therefore prime elements, of $S$. By Lemma~\ref{lemma: primes ascend/descend}(1), we can write $s$ as a product of finitely many prime elements of $\mathcal{G}(S)$. Therefore, $\mathcal{G}(S)$ is a UFD. Now, suppose towards a contradiction that $\mathcal{U}(S)\notin\{\{0\},S\}$. Let $a$ be a nonzero nonunit element of $\mathcal{U}(S)$ whose unique factorization in $S$ is $p_1\cdots p_n$. Similarly, let $q_1\cdots q_m$ represent the unique factorization of $-a$ in $S$. If there is a bijective map $\varphi : [\![1,n]\!]\rightarrow [\![1,m]\!]$ such that $p_i=q_{\varphi(i)}\varepsilon_i$ for $\varepsilon_1,\ldots, \varepsilon_n\in S^{\times}$, then $\varepsilon_1\cdots\varepsilon_n=-1\in S$, a contradiction as $\mathcal{U}(S)\neq S$. Otherwise, if there is no such bijective map, $a^2$ has non-unique factorizations $p_1^2\cdots p_n^2$ and $q_1^2\cdots q_m^2$, also a contradiction. Consequently, $\mathcal{U}(S)\in\{\{0\},S\}$.

To tackle the reverse direction, observe that if $\mathcal{U}(S)=S$, then $S=\mathcal{G}(S)$, so $S$ is clearly a UFS. From now on, we may assume that $\mathcal{U}(S)=\{0\}$. Let $s$ be a nonzero nonunit element of $S$. Since $\mathcal{G}(S)$ is a UFD, we can write $s=p_1\cdots p_n$, where $p_1,\ldots,p_n$ are atoms, and therefore prime elements of $\mathcal{G}(S)$. Choose $e_1,\ldots,e_n\in\{0,1\}$ such that $p_i'=(-1)^{e_i}p_i$ is an element of $S$ for all $i\in[\![1,n]\!]$. If $e_1+\cdots+e_n$ is odd, then observe that $p_1'\cdots p_n'=(-1)^{e_1+\cdots+e_n}s=-s\in S$, which implies that $s\in\mathcal{U}(S)$. Therefore, we can assume that $e_1+\cdots+e_n$ is even. Since $-1$ is a unit of $\mathcal{G}(S)$, observe that $p_i'$ is a prime element of $\mathcal{G}(S)$ for all $i\in[\![1,n]\!]$. By Lemma~\ref{lemma: primes ascend/descend}(2), we can write $s$ as a product of finitely many primes, namely $p_1'\cdots p_n'$. Thus, $S$ is a UFS.
\end{proof}

For the rest of the section, we focus on the half-factorial property. Recall that a half-factorial semidomain $S$ is an atomic semidomain satisfying that, for every $s \in S^*$\!, all factorizations of $s$ share the same length (i.e., $|\mathsf{L}(s)| = 1$ for all $s \in S^*$). While UFSs are clearly HFSs, the reverse implication does not hold as the following example illustrates. 

\begin{example}
	Let $D = \zz[M]$, where $M = \langle (1,n) \mid n \in \nn \rangle \subseteq \nn_0^2$. Clearly, the (cancellative and commutative) monoid $M$ is torsion-free, which implies that $D$ is an integral domain by \cite[Theorem~8.1]{rG84}. Consider the semisubtractive semidomain $R = \{f \in D \mid f(0,0) \in \nn_0\}$, and let $S = \{f \in R \mid f(0,0) > 0\}$. Since $S$ is a multiplicative subset of $R$ (i.e., a submonoid of $(R^*,\cdot)$), we can consider the localization of $R$ at $S$, which we denote by $S^{-1}R$. We already established that $S^{-1}R$ is a semisubtractive semidomain (Lemma~\ref{lemma: localization of semisubtractive semidomains}). It is a known fact, as stated in \cite[Example~4.23]{CGG21}, that $M$ is an HFM that is not a UFM. As a result, the semidomain $S^{-1}R$ is not a UFS. In fact, if $a_1 + \cdots + a_n$ and $a_1' + \cdots + a_{n'}'$ are two distinct factorizations of $m \in M$, then we have that
	\[
	\frac{x^{a_1}}{1} \cdots \frac{x^{a_n}}{1} \hspace{.5 cm} \text{ and } \hspace{.5 cm} \frac{x^{a_1'}}{1} \cdots \frac{x^{a_{n'}'}}{1}
	\]
	are two distinct factorizations of the element $\frac{x^m}{1}\in S^{-1}R$. We now show that $S^{-1}R$ is an HFS. Since all the elements of $M \setminus \{(0,0)\}$ that are not atoms are divisible by $(1,1)$ and $x^{(1,1)}/1$ is irreducible in $S^{-1}R$, the semidomain $S^{-1}R$ is atomic. Now let $f/g$ be a nonzero nonunit element of $S^{-1}R$. Since $f/g \simeq_{S^{-1}R} f/1$, there is no loss in assuming that $g = 1$. Write $f = c_kx^{m_k} + \cdots + c_1x^{m_1}$, where $m_k > \cdots > m_1 > (0,0)$ in the lexicographic order. Let
	\[
	\frac{f_1}{g_1} \cdots \frac{f_n}{g_n}\hspace{.5 cm} \text{ and } \hspace{.5 cm}	\frac{f'_1}{g'_1} \cdots \frac{f'_{n'}}{g'_{n'}} 
	\]  
	be two distinct factorizations of $f/1$ in $S^{-1}R$, and suppose by way of contradiction that $n \neq n'$. Observe that if $f'/g'$ is an atom of $S^{-1}R$, then writing $f' = d_lx^{o_l} + \cdots + d_1x^{o_1}$ with $o_l > \cdots > o_1 > (0,0)$ in the lexicographic order, we have that $o_1 \in \mathcal{A}(M)$ because, again, all the elements of $M\setminus \{(0,0)\}$ that are not atoms are divisible by $(1,1)$. Consequently, the element $m_1 \in M$ has two factorizations of lengths $n$ and $n'$, which contradicts that $M$ is an HFM. Therefore, $S^{-1}R$ is a semisubtractive half-factorial semidomain that is not factorial.   
\end{example}

We are now in a position to describe the semisubtractive semidomains that are half-factorial.

\begin{theorem} \label{theorem: half-factoriality}
	Let $S$ be a semisubtractive semidomain. Then $S$ is an HFS if and only if $\mathcal{G}(S)$ is an HFD and $\mathcal{A}(S) = S \cap \mathcal{A}(\mathcal{G}(S))$. 
\end{theorem}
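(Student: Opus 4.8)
The plan is to prove both implications by transferring factorizations between $S$ and $\mathcal{G}(S)$, using the squaring map $s \mapsto s^2$ to neutralize the sign ambiguity inherent in semisubtractivity. Throughout, recall from Lemma~\ref{lemma: atoms}(1) that the inclusion $S \cap \mathcal{A}(\mathcal{G}(S)) \subseteq \mathcal{A}(S)$ always holds, so the condition $\mathcal{A}(S) = S \cap \mathcal{A}(\mathcal{G}(S))$ is equivalent to the reverse inclusion $\mathcal{A}(S) \subseteq \mathcal{A}(\mathcal{G}(S))$; that is, every atom of $S$ is an atom of $\mathcal{G}(S)$.

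For the forward implication, assume $S$ is an HFS (in particular, $S$ is atomic). First I would establish the atom condition. Suppose, for contradiction, that there is some $a \in \mathcal{A}(S) \setminus \mathcal{A}(\mathcal{G}(S))$. By Lemma~\ref{lemma: atoms}(2), $-a$ is a nonzero nonunit of $S$ that is not an atom, so atomicity yields a factorization $-a = a_1 \cdots a_k$ into atoms of $S$ with $k \geq 2$. Squaring gives $a^2 = (-a)^2 = a_1 \cdots a_k\, a_1 \cdots a_k$, a factorization of $a^2$ in $S$ of length $2k \geq 4$, while $a^2 = a \cdot a$ has length $2$; this contradicts half-factoriality. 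Hence $\mathcal{A}(S) = S \cap \mathcal{A}(\mathcal{G}(S))$. Next, since $S$ is atomic, $\mathcal{G}(S)$ is atomic by Theorem~\ref{theorem: atomicity iff}. To see that $\mathcal{G}(S)$ is half-factorial, let $s$ be a nonzero nonunit of $\mathcal{G}(S)$; replacing $s$ by $-s$ if necessary (which preserves factorization lengths in $\mathcal{G}(S)$), I may assume $s \in S$. Given any factorization $s = c_1 \cdots c_n$ into atoms of $\mathcal{G}(S)$, choose signs so that $c_i' = \pm c_i \in S$; by the atom condition each $c_i' \in S \cap \mathcal{A}(\mathcal{G}(S)) = \mathcal{A}(S)$, and since $(c_i')^2 = c_i^2$ we obtain $s^2 = (c_1')^2 \cdots (c_n')^2$, a factorization of $s^2$ in $S$ of length $2n$. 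As $S$ is an HFS, every factorization of $s^2$ in $S$ has a common length $L$, forcing $2n = L$; since $n$ was the length of an arbitrary factorization of $s$ in $\mathcal{G}(S)$, all such lengths coincide, and $\mathcal{G}(S)$ is an HFD.

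For the reverse implication, assume $\mathcal{G}(S)$ is an HFD and $\mathcal{A}(S) = S \cap \mathcal{A}(\mathcal{G}(S))$. The key point is that atomicity of $S$ comes for free: an HFD is a BFD, so by Theorem~\ref{theorem: ACCP, BFP, and FFP}(2) the semidomain $S$ is a BFS, whence $S^*$ satisfies the ACCP and is therefore atomic. For half-factoriality, let $s \in S^*$ be a nonzero nonunit with two factorizations $s = a_1 \cdots a_n = b_1 \cdots b_m$ into atoms of $S$. By the atom condition every $a_i$ and $b_j$ lies in $\mathcal{A}(\mathcal{G}(S))$, so both expressions are factorizations of $s$ in $\mathcal{G}(S)$; since $\mathcal{G}(S)$ is an HFD, $n = m$. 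Thus $|\mathsf{L}_S(s)| = 1$ for every $s \in S^*$, and $S$ is an HFS.

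The main obstacle I anticipate is the sign/parity bookkeeping that arises whenever one tries to match a factorization of $s$ in $\mathcal{G}(S)$ with a factorization of $s$ (rather than $-s$) in $S$: when $s \in \mathcal{U}(S)$, the sign-adjusted product may equal $-s$ instead of $s$, so a direct length comparison for $s$ itself can fail. The proposal circumvents this in two ways---by passing to $s^2$ in the forward direction (where $(\pm c_i)^2 = c_i^2$ kills the ambiguity) and by deriving atomicity in the reverse direction from the bounded-factorization property rather than from the atomicity criterion of Theorem~\ref{theorem: atomicity iff}, thereby avoiding any analysis of products of atoms that land in $\mathcal{U}(S)$.
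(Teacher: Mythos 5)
Your proposal is correct and follows essentially the same route as the paper: the atom condition is forced by squaring a hypothetical $a \in \mathcal{A}(S)\setminus\mathcal{A}(\mathcal{G}(S))$ to produce factorizations of $a^2$ of different lengths, and half-factoriality of $\mathcal{G}(S)$ is transferred by sign-adjusting atoms and passing to $s^2$ in $S$, exactly as in the paper's argument. Your filled-in reverse implication (which the paper leaves to the reader) and your use of the BFS property to get atomicity are the natural choices and are sound.
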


\begin{proof}
	Proving the reverse implication is straightforward, so we leave the details to the reader. Suppose now that $S$ is an HFS. We already established that $S \cap \mathcal{A}(\mathcal{G}(S)) \subseteq \mathcal{A}(S)$ (Lemma~\ref{lemma: atoms}). We now show that the reverse inclusion holds (given that $S$ is an HFS). Assume by way of contradiction that there exists $a \in \mathcal{A}(S) \setminus \mathcal{A}(\mathcal{G}(S))$. By Lemma~\ref{lemma: atoms}, we have that $-a \in S^* \setminus (S^{\times}\cup \mathcal{A}(S))$. Consequently, the element $a^2$ has two factorizations of different lengths in $S$, namely $a\cdot a$ and $(a_1 \cdots a_t)^2$ (for some $t \in \nn_{>1}$) with $a_1 \cdots a_t \in \mathsf{Z}_S(-a)$. This contradicts the fact that $S$ is half-factorial. Hence $\mathcal{A}(S) = S \cap \mathcal{A}(\mathcal{G}(S))$. 
	
	Observe that $\mathcal{G}(S)$ is atomic (in fact, a BFD) by Theorem~\ref{theorem: ACCP, BFP, and FFP}. Let $s$ be a nonzero nonunit element of $\mathcal{G}(S)$, and let 
	\[
		a_1 \cdots a_n b_1 \cdots b_m \hspace{.5 cm} \text{ and } \hspace{.5 cm} a_1' \cdots a_l'b_1' \cdots b_k'
	\]
	be two factorizations of $s$ in $\mathcal{G}(S)$ satisfying that $a_1, \ldots, a_n, a_1', \ldots, a_l' \in \mathcal{A}(S)$ and $b_1, \ldots, b_m, b_1', \ldots, b_k' \notin \mathcal{A}(S)$ for some $n, m, l, k \in \nn_0$ (for the rest of the proof, we proceed under the assumption that expressions such as $a_1 \cdots a_0$ represent the empty product, which is equal to $1$). Thus, we have that
	\[
	a_1^2 \cdots a_n^2(-b_1)^2 \cdots (-b_m)^2 \hspace{.3 cm}\text{and} \hspace{.3 cm} (a_1')^2 \cdots (a_l')^2 (-b_1')^2 \cdots (-b_k')^2
	\]
	are two factorizations in $S$ of the element $s^2$, which is not a unit of $S$ by Lemma~\ref{lemma: units}. Since $S$ is an HFS, we have that $2(m + n) = 2(l + k)$, which implies that $m + n = l + k$. We can then conclude that $\mathcal{G}(S)$ is an HFD. 
\end{proof}

In Theorem~\ref{theorem: half-factoriality}, the assumption that $\mathcal{A}(S) = S \cap \mathcal{A}(\mathcal{G}(S))$ is not superfluous. Consider the following example.

\begin{example}
	As in Example~\ref{ex: atom in S not in gp(S)}, consider the semidomain 
	\[
	S = \left\{c_nx^n + \cdots + c_1x + c_0 \in \zz[x] \mid \text{ either } c_0 > 0 \text{ or } c_1 \geq c_0 = 0\right\}\!.
	\]
	We already established that $S$ is a semisubtractive semidomain such that $\mathcal{G}(S) \cong \zz[x]$ and $-x^2 \in \mathcal{A}(S) \setminus \mathcal{A}(\zz[x])$. By virtue of Theorem~\ref{theorem: ACCP, BFP, and FFP}, we have that $S$ is atomic. However, note that $\{2,4\} \subseteq \mathsf{L}(x^4)$, which implies that $S$ is not half-factorial.  
\end{example}

We conclude this section providing a large class of proper half-factorial (i.e., half-factorials that are not factorials) semisubtractive semidomains.

An integral domain $(R,+,\cdot)$ is called an \emph{ordered integral domain} if there exists an order relation $\leq$ on $R$ satisfying the following two conditions:
\begin{enumerate}
	\item[(i)] $a \leq b$ implies $a + r \leq b + r$ for all $a,b,r \in R$;
	\item[(ii)] $a \leq b$ and $r \geq 0$ imply $ar \leq br$ for all $a,b,r \in R$.
\end{enumerate}
Given an ordered integral domain $R$, we denote by $R^+$ the subset of $R$ consisting of all elements that are greater than or equal to $0$.

\begin{example}
	Let $R$ be an ordered UFD. Consider the semisubtractive semidomain $S = R^+ + xR[x]$, and notice that $\mathcal{G}(S) \cong R[x]$. Let $f$ be an irreducible of $S$ satisfying that $-f \notin S$ (e.g., $x + 1$). Since $R[x]$ is a UFD, we can write $-f = a_1 \cdots a_k$ with $a_i \in \mathcal{A}(R[x])$ for every $i \in \llbracket 1,k \rrbracket$. As $-f \not \in S$, we have $f = (-1)^{n_1}a_1 \cdots (-1)^{n_k}a_k$, where each factor $(-1)^{n_i}a_i \in S$ for every $i \in \llbracket 1,k \rrbracket$; note that, by Lemma~\ref{lemma: units}, none of these factors is a unit of $S$. Consequently, we have that $k = 1$ which, in turn, implies that $f \in \mathcal{A}(R[x])$. Hence $\mathcal{A}(S) = S \cap \mathcal{A}(\mathcal{G}(S))$. By Theorem~\ref{theorem: half-factoriality}, it follows that $S$ is half-factorial. Note that $S$ is not factorial as $x \cdot x$ and $(-x)\cdot(-x)$ are two factorizations of the same element of $S$; however, we have that $x \not\simeq_S -x$.
\end{example}

As a final remark, we note that the results presented in this paper (Lemma~\ref{lemma: units}, Lemma~\ref{lemma: atoms}, Theorem~\ref{theorem: Furstenberg}, Lemma~\ref{lemma: atomicity}, Theorem~\ref{theorem: atomicity iff}, Theorem~\ref{theorem: ACCP, BFP, and FFP}, Lemma~\ref{lemma: primes ascend/descend}, Theorem~\ref{theorem: UFP}, and Theorem~\ref{theorem: half-factoriality}) can be generalized to submonoids N of a monoid M satisfying that, for some $\varepsilon \in M$, $\varepsilon^2 = 1$ and $M = \{n, n\varepsilon \mid n \in N\}$.

\section*{Acknowledgments}

The authors kindly thank their mentor, Dr. Harold Polo, for his dedicated guidance and support throughout the research period. The authors thank an anonymous referee for providing several useful suggestions, including the ideas for Lemma~\ref{lemma: atomicity} and the reverse implication of both Theorem~\ref{theorem: atomicity iff} and Theorem~\ref{theorem: UFP}, which have considerably improved the quality of this paper. The authors extend their gratitude to the MIT PRIMES program for making this opportunity possible.

\bigskip

\bigskip

\end{document}